\documentclass[12pt,a4paper]{amsart}

\usepackage{amsmath}
\usepackage{amssymb}
\usepackage{amsfonts}
\usepackage{amsthm}
\usepackage[margin=4cm]{geometry}
\usepackage{setspace}
\usepackage{hyperref}
\usepackage{fixmath}
\usepackage{aliascnt}
\usepackage{comment}
\usepackage[usenames,dvipsnames]{color}
\usepackage{stmaryrd}
\usepackage{a4wide}
\usepackage[all]{xy}

\newtheoremstyle{property} 
{3pt} 
{3pt} 
{\itshape} 
{} 
{\bfseries} 
{} 
{\newline} 
{\thmname{#1} \thmnumber{#2} (\thmnote{#3})} 

\theoremstyle{plain}
\newtheorem{theorem}{Theorem}[section] 
\theoremstyle{definition}
\theoremstyle{property}
\theoremstyle{remark}

\newaliascnt{proposition}{theorem}
\newaliascnt{lemma}{theorem}
\newaliascnt{corollary}{theorem}
\newaliascnt{observation}{theorem}
\newaliascnt{definition}{theorem}
\newaliascnt{fact}{theorem}
\newaliascnt{remark}{theorem}
\newaliascnt{example}{theorem}
\newaliascnt{property}{theorem}

\theoremstyle{plain}
\newtheorem{proposition}[proposition]{Proposition} 
\newtheorem{lemma}[lemma]{Lemma} 
\newtheorem{corollary}[corollary]{Corollary} 

\theoremstyle{definition}
\newtheorem{definition}[definition]{Definition} 
\newtheorem{remark}[remark]{Remark}
\newtheorem*{definition*}{Definition} 
\newtheorem*{remark*}{Remark} 
\newtheorem*{example*}{Example} 
\theoremstyle{property}
\theoremstyle{remark}

\aliascntresetthe{proposition}
\aliascntresetthe{lemma}
\aliascntresetthe{corollary}
\aliascntresetthe{observation}
\aliascntresetthe{definition}
\aliascntresetthe{fact}
\aliascntresetthe{remark}
\aliascntresetthe{example}
\aliascntresetthe{property}


\providecommand{\bfbeta}{\mathbold{\beta}}

\title[Existential theory of henselian fields]{The existential theory of\\ equicharacteristic henselian valued fields}
\author{Sylvy Anscombe and Arno Fehm}
\thanks{\today\\During this research the first author was funded by EPSRC grant EP/K020692/1.}

\address{School of Mathematics, University of Leeds, Leeds LS2 9JT, United Kingdom}
\email{pmtwga@leeds.ac.uk}
\address{Fachbereich Mathematik und Statistik, University of 
Konstanz, 78457 Konstanz, Germany}
\email{arno.fehm@uni-konstanz.de}

\begin{document}

\begin{abstract}
We study the existential (and parts of the universal-existential) theory of equicharacteristic henselian valued fields.
We prove, among other things, an existential Ax-Kochen-Ershov principle,
which roughly says that the existential theory of an equi\-characteristic henselian valued field (of arbitrary characteristic) is determined
by the existential theory of the residue field; in particular, it is independent of the value group.
As an immediate corollary, we get an unconditional proof of the decidability of the existential theory of $\mathbb{F}_{q}((t))$.
\end{abstract}

\maketitle

\section{Introduction}

\noindent
We study the first order theory of a henselian valued field $(K,v)$ in the language of valued fields.
For {\bf residue characteristic zero}, this theory is well-understood through the celebrated {\em Ax-Kochen-Ershov (AKE) principles},
which state that, in this case, the theory of $(K,v)$ is completely determined by the theory of the residue field $Kv$
and the theory of the value group $vK$ (see e.g.~\cite[\S4.6]{PrestelDelzell}). 
In other words, if a sentence holds in one such valued field, then it 
holds in any other with elementarily equivalent residue field and value group (the {\em transfer principle}).
As a consequence, one gets that the theory of $(K,v)$ is {\em decidable} if and only if the theory of the residue field and the theory of the value group are decidable.

Some of this theory can be carried over to certain {\bf mixed characteristic} henselian valued fields
such as the fields of $p$-adic numbers $\mathbb{Q}_p$, whose theory was axiomatized and proven to be decidable by Ax-Kochen and Ershov in 1965.
However, for henselian valued fields of {\bf positive characteristic}, no such general principles are available. 
For example, in \cite{Kuhlmann01}, it is shown that the theory of characteristic $p>0$ henselian valued fields with value group elementarily equivalent to $\mathbb{Z}$ and residue field $\mathbb{F}_{p}$ is incomplete.
It is not known whether there is a suitable modification of the AKE principles that hold for arbitrary henselian valued fields of positive characteristic, and the decidability of the field of formal power series $\mathbb{F}_q((t))$ is a long-standing open problem.

For the first problem, the most useful approximations are AKE principles for certain classes of valued fields,
most notably F.-V.~Kuhlmann's recently published work \cite{Kuhlmann??} on the model theory of {\em tame fields}.
For the second problem, the best known result is by Denef and Schoutens from 2003, who proved in \cite{Denef-Schoutens03}
that resolution of singularities in positive characteristic would imply
that the {\em existential} theory of $\mathbb{F}_q((t))$ is decidable (i.e.~Hilbert's tenth problem for $\mathbb{F}_q((t))$ has a positive solution).

In this work, we take a different approach at deepening our understanding of the positive characteristic case:
Instead of limiting ourselves to certain classes of valued fields,
we attempt to prove results for arbitrary equicharacteristic henselian valued fields, 
but (having results like Denef-Schoutens in mind) instead restrict to existential or slightly more general sentences:
The technical heart of this work is a study of transfer principles for certain universal-existential sentences,
which builds on the aforementioned \cite{Kuhlmann??},
see the results in Section \ref{sec:transfer}.
While some of these general results will have applications for example in the theory of definable valuations
(see \cite{Anscombe-Koenigsmann14}, \cite{Cluckers-Derakhshan-Leenknegt-Macintyre12}, \cite{Fehm14}, \cite{Prestel14}
for some of the recent developments),
in this work we then restrict this machinery to existential sentences and deduce the following result (cf.~\autoref{thm:E.complete}):

\begin{theorem}\label{thm:intro.E.complete}
For any field $F$,
the theory $T$ of equicharacteristic henselian nontrivially valued fields with residue fields which model both the existential and universal theories of $F$ is
$\exists$-complete, i.e. for any existential sentence $\phi$ either $T\models\phi$ or $T\models\neg\phi$.
\end{theorem}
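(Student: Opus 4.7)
The plan is to deduce the theorem from an existential Ax-Kochen-Ershov principle for equicharacteristic henselian nontrivially valued fields, of the form developed in Section~\ref{sec:transfer}.

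First I would unpack the hypothesis on the residue fields. The assumption that a residue field $Kv$ of a model $(K,v)$ of $T$ satisfies both $\exists$-Th$(F)$ and $\forall$-Th$(F)$ is equivalent to the equality $\exists$-Th$(Kv)=\exists$-Th$(F)$: every existential sentence true in $F$ transfers up by the first condition, while every existential sentence false in $F$ is the negation of a universal sentence in $\forall$-Th$(F)$ and hence also false in $Kv$. Consequently, all residue fields of models of $T$ share one common existential theory.

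The second step is to apply the existential Ax-Kochen-Ershov transfer principle from Section~\ref{sec:transfer}: if $(K_1, v_1)$ and $(K_2, v_2)$ are equicharacteristic henselian nontrivially valued fields with $\exists$-Th$(K_1 v_1)=\exists$-Th$(K_2 v_2)$, then $\exists$-Th$(K_1, v_1)=\exists$-Th$(K_2, v_2)$ in the language of valued fields. Combined with the first step this yields that any two models of $T$ satisfy exactly the same existential sentences, which is precisely $\exists$-completeness of $T$.

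The main obstacle is the existential AKE principle itself, which I am using as a black box and which is the technical heart of the paper. One would expect its proof to proceed by passing to a sufficiently saturated elementary extension $(K_2^*, v_2^*)\succeq (K_2, v_2)$ and constructing a valued-field embedding $(K_1, v_1)\hookrightarrow (K_2^*, v_2^*)$; then an existential sentence witnessed in $(K_1,v_1)$ transfers by preservation under extensions and elementary equivalence. The substantive difficulty is producing such an embedding in \emph{positive} residue characteristic, where classical AKE is known to fail; this is where Kuhlmann's tame-field machinery \cite{Kuhlmann??} must enter, presumably via a reduction that relates an arbitrary equicharacteristic henselian nontrivially valued field to a tame one without disturbing the existential theory of the residue field. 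That reduction is the content of the universal-existential transfer results of Section~\ref{sec:transfer}, and, granting them, the argument above is a short deduction.
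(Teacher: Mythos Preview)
Your deduction is valid, but in the paper's logical order it is circular. The existential AKE principle you invoke is not proved in Section~\ref{sec:transfer}; it appears only later as \autoref{cor:existential.AKE}, and the paper explicitly remarks that \autoref{cor:existential.AKE} ``is in fact simply a reformulation of \autoref{thm:E.complete}'' --- which is the very statement you are trying to prove. So you are deducing the theorem from an equivalent reformulation of itself.

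What Section~\ref{sec:transfer} actually provides is the Main Proposition (\autoref{prp:AkE.transfer}) and its corollary on near $\forall^k\exists$-completeness (\autoref{cor:near.AkE.completeness}). The paper's proof of \autoref{thm:E.complete} proceeds directly from these: first \autoref{lem:E.complete} shows that the stronger theory $\mathbf{T}_F$ (residue field elementarily equivalent to $F$) is $\exists$-complete, using \autoref{cor:near.AkE.completeness} and a Frobenius twist to eliminate the $p^{-n}$; then two short reduction lemmas (\autoref{lem:existential.axiom} and \autoref{lem:universal.axiom}) show that already $T_F^\exists$ proves every existential consequence of $\mathbf{T}_F$ and $T_F^\forall$ every universal one. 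Combining these gives $\exists$-completeness of $\mathbf{T}_F^1 = T_F^\exists \cup T_F^\forall$. Only afterwards is the existential AKE principle read off.

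Your first step (that models of $T$ all have residue field with the same existential theory as $F$) is correct and in spirit replaces the pair of reduction lemmas. One could indeed reorganise the paper to prove \autoref{lem:E.complete} and \autoref{lem:existential.axiom} first, derive the existential AKE principle, and then deduce the theorem as you do; but that is not what Section~\ref{sec:transfer} contains, and as written your black box is downstream of the target.
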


Note that the value group plays no role here: The existential theory of an equicharacteristic henselian nontrivially valued field
is determined solely by its residue field.
From this theorem, we obtain an AKE principle for $\exists$-sentences (cf.~\autoref{cor:existential.AKE}):

\begin{corollary}\label{cor:intro.existential.decidability}
Let $(K,v),(L,w)$ be equicharacteristic henselian nontrivially valued fields. 
If the residue fields $Kv$ and $Lw$ have the same existential theory,
then so do the valued fields $(K,v)$ and $(L,w)$.
\end{corollary}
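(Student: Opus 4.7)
The plan is to deduce the corollary directly from \autoref{thm:intro.E.complete} by a very short bookkeeping argument. The first observation I would make is that, for any field $F$, the existential and universal theories are interdefinable: a universal sentence is the negation of an existential sentence and vice versa. Consequently, two fields having the same existential theory automatically have the same universal theory. Applied to the hypothesis of the corollary, this means that $Kv$ and $Lw$ model exactly the same existential sentences \emph{and} exactly the same universal sentences.

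Now I would set $F := Kv$ (the choice $F := Lw$ would work equally well) and let $T$ be the theory provided by \autoref{thm:intro.E.complete}, namely the theory of equicharacteristic henselian nontrivially valued fields whose residue fields model both $\exists\text{-Th}(F)$ and $\forall\text{-Th}(F)$. By the preceding observation, both residue fields $Kv$ and $Lw$ satisfy these conditions, so both $(K,v)$ and $(L,w)$ are models of $T$.

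Finally, I would invoke the $\exists$-completeness of $T$: for every existential sentence $\phi$ in the language of valued fields, either $T\models\phi$ or $T\models\neg\phi$. Since $(K,v)$ and $(L,w)$ are both models of $T$, they must agree on $\phi$. As $\phi$ was arbitrary, the existential theories of $(K,v)$ and $(L,w)$ coincide, which is precisely the conclusion.

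There is essentially no obstacle to overcome in this corollary, since all of the substantive content has been packaged into \autoref{thm:intro.E.complete}; the only subtlety worth flagging is the symmetric passage from $\exists\text{-Th}$ to $\forall\text{-Th}$, which is needed so that the hypotheses of the theorem are visibly satisfied by both residue fields simultaneously.
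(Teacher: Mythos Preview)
Your proof is correct and is essentially the approach the paper itself endorses: although the paper's formal proof of the corresponding \autoref{cor:existential.AKE} routes through the one-sided \autoref{thm:existential.AKE}, immediately afterwards the authors remark that \autoref{cor:existential.AKE} ``is in fact simply a reformulation of \autoref{thm:E.complete}'', which is precisely the direct derivation you carry out. Your observation that equal existential theories force equal universal theories is exactly the bookkeeping needed to see both residue fields as models of the same $\mathbf{T}_F^1$.
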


Moreover, we conclude the following corollary on decidability (cf.~\autoref{cor:existential.decidability}):

\begin{corollary}
Let $(K,v)$ be an equicharacteristic henselian nontrivially valued field. The following are equivalent:
\begin{enumerate}
\item The existential theory of $Kv$ in the language of rings is decidable.
\item The existential theory of $(K,v)$ in the language of valued fields is decidable.
\end{enumerate}
\end{corollary}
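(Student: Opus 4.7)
The plan is to prove the two directions separately: $(1)\Rightarrow(2)$ via \autoref{thm:intro.E.complete} and a parallel proof search, and $(2)\Rightarrow(1)$ via an explicit effective translation of existential ring sentences into existential sentences in the language of valued fields. I will work in the one-sorted language $\mathcal{L}_{\mathrm{val}}=\mathcal{L}_{\mathrm{ring}}\cup\{\mathcal{O}\}$ with a unary predicate $\mathcal{O}$ for the valuation ring.

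For $(1)\Rightarrow(2)$, decidability of $\mathrm{Th}_\exists(Kv)$ immediately yields that of $\mathrm{Th}_\forall(Kv)$ by negation, so the theory $T$ furnished by \autoref{thm:intro.E.complete} with $F=Kv$ admits a decidable axiomatisation: the axioms for henselian, nontrivially valued, equicharacteristic fields are recursive, and the two residue-field theories are encoded as $\mathcal{L}_{\mathrm{val}}$-sentences via the standard interpretation of $Kv$ in $(K,v)$. Since $(K,v)\models T$ and $T$ is $\exists$-complete, an existential $\mathcal{L}_{\mathrm{val}}$-sentence $\phi$ satisfies $(K,v)\models\phi\iff T\vdash\phi$, and exactly one of $T\vdash\phi$, $T\vdash\neg\phi$ holds; enumerating proofs from $T$ for both in parallel then decides $\phi$.

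For $(2)\Rightarrow(1)$, I will construct an effective translation $\psi\mapsto\psi^*$ so that $Kv\models\psi\iff(K,v)\models\psi^*$ for any nontrivially valued field. After putting $\psi$ in disjunctive normal form and combining disequations by multiplication, I may assume $\psi=\exists\bar{x}\bigl(\bigwedge_{i}p_i(\bar{x})=0\wedge q(\bar{x})\neq 0\bigr)$, and define
\[
\psi^*:=\exists\bar{x}\,\exists\bar{c}\,\exists t\,\exists y\,\exists z\Bigl(\mathcal{O}(\bar{x})\wedge\mathcal{O}(\bar{c})\wedge\mathcal{O}(z)\wedge ty=1\wedge\neg\mathcal{O}(y)\wedge\textstyle\bigwedge_{i}p_i(\bar{x})=tc_i\wedge q(\bar{x})z=1\Bigr),
\]
which is existential in $\mathcal{L}_{\mathrm{val}}$. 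The clauses $ty=1$ and $\neg\mathcal{O}(y)$ together force $t\in\mathfrak{m}_v\setminus\{0\}$, so the implication $(K,v)\models\psi^*\Rightarrow Kv\models\psi$ is immediate by reduction modulo $\mathfrak{m}_v$. For the converse, lift any residue witness $\bar{a}$ to $\bar{b}\in\mathcal{O}_v^n$ and set $t=p_{i_0}(\bar{b})$ with $i_0$ minimising $v(p_i(\bar{b}))$ among those $i$ with $p_i(\bar{b})\neq 0$ (or, if all $p_i(\bar{b})$ vanish, any nonzero $t\in\mathfrak{m}_v$, guaranteed by nontriviality), so that $c_i:=p_i(\bar{b})/t\in\mathcal{O}_v$.

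The main subtlety lies in this last choice of $t$: for a generic $t\in\mathfrak{m}_v$ the ratios $p_i(\bar{b})/t$ need not lie in $\mathcal{O}_v$, so the minimum-valuation trick is essential. Once the translation $\psi\mapsto\psi^*$ is in hand, effectivity is clear by inspection and decidability of $\mathrm{Th}_\exists(K,v)$ transfers to decidability of $\mathrm{Th}_\exists(Kv)$. The forward direction is then a clean consequence of \autoref{thm:intro.E.complete} combined with Gödel's completeness theorem.
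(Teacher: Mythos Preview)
Your proof is correct and follows essentially the same strategy as the paper: for $(1)\Rightarrow(2)$ you use the $\exists$-completeness of the theory $T=\mathbf{T}^1_{Kv}$ together with a recursive axiomatisation and a proof search, exactly as the paper does; for $(2)\Rightarrow(1)$ you translate existential residue-field sentences into existential valued-field sentences, which is again the paper's argument.

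The only real difference is your choice to work in the one-sorted language $\mathcal{L}_{\mathrm{val}}=\mathcal{L}_{\mathrm{ring}}\cup\{\mathcal{O}\}$ rather than the paper's three-sorted $\mathcal{L}_{\rm vf}$. In $\mathcal{L}_{\rm vf}$ the residue field is its own sort with its own ring operations, so an existential $\mathcal{L}_{\mathrm{ring}}$-sentence about $Kv$ is \emph{literally} an existential $\mathcal{L}_{\rm vf}$-sentence, and the paper dispatches $(2)\Rightarrow(1)$ in one line. Your explicit translation $\psi\mapsto\psi^*$, including the minimum-valuation trick to ensure the $c_i$ land in $\mathcal{O}_v$, is the honest work needed to recover this in the one-sorted setting; it is correct, but it is extra effort arising purely from the language choice rather than a genuinely different mathematical idea.
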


\noindent 
As an immediate consequence, we get the first unconditional proof of the decidability
of the existential theory of $\mathbb{F}_{q}((t))$ (cf.~\autoref{cor:Fq((t))}).
Note, however, that the conditional result in \cite{Denef-Schoutens03} is for a language with a constant for $t$ --
Section \ref{sec:decidability} also contains a brief discussion of this difference. 

\section{Valued fields}

\noindent
For a valued field $(K,v)$ we denote by $vK=v(K^\times)$ its value group, by $\mathcal{O}_v$ its valuation ring
and by $Kv=\{av:a\in\mathcal{O}_v\}$ its residue field.
For standard definitions and facts about henselian valued fields we refer the reader to \cite{Engler-Prestel05}.
As a rule, if $L/K$ is a field extension to which the valuation $v$ can be extended uniquely, we denote also
this unique extension by $v$.
This applies in particular if $v$ is henselian, and
for the perfect hull $L=K^{\rm perf}$ of $K$.
We will make use of the following well-known fact:

\begin{lemma}\label{fact:complete.the.square}
Let $(K,v)$ be a valued field and let $F/Kv$ be any field extension. Then there is an extension of valued fields $(L,w)/(K,v)$ such that $Lw/Kv$ is isomorphic to the extension $F/Kv$.
\end{lemma}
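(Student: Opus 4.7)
The plan is to handle the extension in two stages, according to the transcendental and algebraic parts of $F/Kv$. First, choose a transcendence basis $T\subseteq F$ of $F/Kv$ and a corresponding set $T'=\{t_i':i\in I\}$ of indeterminates over $K$ in bijection with $T$. Extend $v$ to the Gauss valuation $w$ on $K(T')$ defined on polynomials by $w(\sum_\alpha a_\alpha (T')^\alpha)=\min_\alpha v(a_\alpha)$. This is a standard construction whose residue field $K(T')w$ is canonically isomorphic, as an extension of $Kv$, to the purely transcendental extension $Kv(T)$, via $t_i'w\leftrightarrow t_i$. After this step we may replace $(K,v)$ by $(K(T'),w)$, thereby reducing to the case in which $F/Kv$ is algebraic.

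So assume $F/Kv$ is algebraic and fix any extension $\tilde v$ of $v$ to $K^{\rm alg}$; the residue field $(K^{\rm alg})\tilde v$ is an algebraic closure of $Kv$, so we may fix an embedding of $F$ into $(K^{\rm alg})\tilde v$ over $Kv$ and work inside that copy. The core of the argument is the following lifting assertion: given $\bar\alpha\in(K^{\rm alg})\tilde v$ with minimal polynomial $\bar f\in Kv[x]$ of degree $n$, there exists $\alpha\in K^{\rm alg}$ with $\alpha\tilde v=\bar\alpha$, $[K(\alpha):K]=n$, and $K(\alpha)\tilde v=Kv(\bar\alpha)$. To prove it, lift $\bar f$ to a monic $f\in\mathcal{O}_v[x]$ of degree $n$, factor $f=\prod_i(x-\gamma_i)$ in $K^{\rm alg}$, and note that $\bar f=\prod_i(x-\gamma_i\tilde v)$; since $\bar\alpha$ is a root of $\bar f$, some $\gamma_i$ satisfies $\gamma_i\tilde v=\bar\alpha$ and we take $\alpha=\gamma_i$. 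The minimal polynomial of $\alpha$ over $K$ divides $f$, hence has degree $d\leq n$; its reduction has $\bar\alpha$ as a root, forcing $d\geq n$, hence $d=n$. The fundamental inequality then squeezes $K(\alpha)\tilde v$ between $Kv(\bar\alpha)$ (of degree $n$ over $Kv$) and an extension of degree at most $n$, yielding equality.

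Given this assertion, construct $L$ by transfinite induction: well-order $F$ as $(\bar\alpha_\lambda)_{\lambda<\mu}$, set $L_0=K$, at successor stages let $L_{\lambda+1}=L_\lambda(\alpha_\lambda)$ where $\alpha_\lambda\in K^{\rm alg}$ is the lift of $\bar\alpha_\lambda$ supplied by applying the assertion to $(L_\lambda,\tilde v|_{L_\lambda})$, and take unions at limit ordinals. Inductively $L_\lambda\tilde v=Kv(\bar\alpha_\nu:\nu<\lambda)$, so $L:=L_\mu$ satisfies $L\tilde v=F$ as required. The main subtlety, and the reason the lifting assertion is not a triviality, is the control of degrees: a naive lift $\beta\in\mathcal{O}_{\tilde v}$ of $\bar\alpha$ can easily satisfy $[K(\beta):K]>n$, and then the fundamental inequality would permit either a residue field strictly larger than $Kv(\bar\alpha)$ or nontrivial ramification. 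Lifting the minimal polynomial itself and then selecting a root with the correct residue is precisely what pins the residue extension down to the prescribed one.
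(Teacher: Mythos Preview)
Your argument is correct. The two-stage construction (Gauss valuation for a transcendence basis, then a transfinite lifting of the algebraic part via carefully chosen roots of lifted minimal polynomials, with the fundamental inequality pinning down the residue extension at each step) is the standard direct route to this fact and goes through as you wrote it.

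By contrast, the paper does not prove this lemma at all: it simply refers to \cite[Theorem~2.14]{Kuhlmann04a}. So there is no real comparison of methods to make; you have supplied a self-contained proof where the paper gives only a citation. One small remark worth making explicit in your write-up: when you say the reduction $\bar g$ of the minimal polynomial $g$ of $\alpha$ has $\bar\alpha$ as a root, you are implicitly using that $g$ has coefficients in $\mathcal{O}_v$, which follows from Gauss's lemma since $g$ is a monic factor of the monic integral polynomial $f$. Everything else is in order.
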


\begin{proof}
See e.g. \cite[Theorem 2.14]{Kuhlmann04a}.
\end{proof}

Also the following lemma is probably well known, but for lack of reference we sketch a proof,
which closely follows \cite[Lemma 9.30]{Kuhlmann??book}:

\begin{definition}\label{def:section}
Let $(K,v)$ be a valued field. 
A \em partial section \rm (of the residue homomorphism) is a map $f:E\longrightarrow K$, for some subfield $E\subseteq Kv$, 
which is an $\mathcal{L}_{\mathrm{ring}}$-embedding such that $(f(a))v=a$ for all $a\in E$.
It is a {\em section} if $E=Kv$.
\end{definition}

\begin{lemma}\label{lem:section}
Let $(K,v)$ be an equicharacteristic henselian valued field, let $E\subseteq Kv$ be a subfield of the residue field, and suppose that there is a partial section $f:E\longrightarrow K$. If $F/E$ is a separably generated subextension of $Kv/E$ then we may extend $f$ to a partial section $F\longrightarrow K$.
\end{lemma}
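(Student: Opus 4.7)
The plan is to extend $f$ in two stages: first across a separating transcendence basis, and then across the separable algebraic part via Hensel's lemma. Fix a separating transcendence basis $B\subseteq F$ of $F/E$, so that $F/E(B)$ is separable algebraic.

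For the first stage, for each $b\in B$ pick an arbitrary lift $\tilde{b}\in\mathcal{O}_v^\times$ with $\tilde{b}v=b$ (possible since $b\in Kv^\times$). I would then check that $\{\tilde{b}:b\in B\}$ is algebraically independent over $f(E)$: if a nonzero polynomial $g\in f(E)[X_1,\dots,X_n]$ were to vanish at some tuple $(\tilde{b}_1,\dots,\tilde{b}_n)$ of lifts, then, using that $f(E)\setminus\{0\}\subseteq\mathcal{O}_v^\times$, one can scale $g$ so that all its coefficients lie in $\mathcal{O}_v$ with at least one being a unit; the residue polynomial $\bar{g}\in E[X_1,\dots,X_n]$ is then nonzero and vanishes at $(b_1,\dots,b_n)$, contradicting the algebraic independence of $B$ over $E$. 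Consequently $f$ extends uniquely to a ring homomorphism $E[B]\to K$ with $b\mapsto\tilde{b}$ and then to a field embedding $E(B)\to K$. That this is a partial section is routine: for any quotient $p(b_1,\dots,b_n)/q(b_1,\dots,b_n)\in E(B)$ the denominator $q^f(\tilde{b}_1,\dots,\tilde{b}_n)\in\mathcal{O}_v$ has nonzero residue $q(b_1,\dots,b_n)$, hence is a unit, and the residue of the whole quotient is $p(b_1,\dots,b_n)/q(b_1,\dots,b_n)$ as required.

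For the second stage, I would use Zorn's lemma on the poset of partial sections $g:F'\to K$ extending $f$ on the fields $E(B)\subseteq F'\subseteq F$, ordered by graph inclusion. Let $g:F'\to K$ be a maximal element. Suppose for contradiction $F'\neq F$, pick $\alpha\in F\setminus F'$, and let $h\in F'[X]$ be its (monic, separable) minimal polynomial. Apply $g$ coefficientwise to get $h^g\in g(F')[X]\subseteq\mathcal{O}_v[X]$; its residue is $h$, which has $\alpha$ as a simple root in $Kv$. By henselianity of $v$, $h^g$ has a unique root $\tilde{\alpha}\in\mathcal{O}_v$ with $\tilde{\alpha}v=\alpha$, and setting $g(\alpha)=\tilde{\alpha}$ produces a strict extension of $g$ to $F'(\alpha)$, contradicting maximality.

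The only genuinely substantive step is the algebraic independence of the lifts in the first stage; once that is settled, both the extension to $E(B)$ and the Hensel-plus-Zorn argument for $F/E(B)$ are largely formal. I do not expect any further obstacles.
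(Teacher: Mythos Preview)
Your proof is correct and follows essentially the same two-stage strategy as the paper: lift a separating transcendence basis (the paper asserts algebraic independence of the lifts without the detailed verification you give), then use Hensel's lemma for the separable algebraic part. The only cosmetic difference is in the packaging of the second stage: rather than Zorn's lemma with element-by-element lifting, the paper takes the relative separable algebraic closure $L_2$ of $f(E(B))$ inside $K$, notes that $v$ is trivial on $L_2$ so that the residue map restricts to an isomorphism $L_2\to L_2v$, uses Hensel's lemma to conclude $F\subseteq L_2v$, and then inverts the residue map on $F$ to obtain the section in one stroke.
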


\begin{proof}
Write $L_{1}:=f(E)$. Let $T$ be a separating transcendence base for $F/E$ and, for each $t\in T$, choose $s_{t}\in K$ such that $s_{t}v=t$. Then $S:=\{s_{t}\;|\;t\in T\}$ is algebraically independent over $L_{1}$. Thus we may extend $f$ to a partial section $E(T)\longrightarrow L_{1}(S)$ by sending $t\longmapsto s_{t}$.

Let $L_{2}$ be the relative separable algebraic closure of $L_{1}(S)$ in $K$. By Hensel's Lemma, $L_{2}v$ is separably algebraically closed in $Kv$. Thus $F$ is contained in $L_{2}v$. Since $v$ is trivial on $L_{2}$, the restriction of the residue map to $L_{2}$ is an isomorphism $L_{2}\longrightarrow L_{2}v$. Thus the restriction to $F$ of the inverse of the residue map is a partial section $F\longrightarrow K$ which extends $f$, as required.
\end{proof}

Recall that a valued field $(K,v)$ of residue characteristic $p$ is {\em tame} if it is henselian,
the value group $vK$ is $p$-divisible,
the residue field $Kv$ is perfect,
and $(K,v)$ is defectless, i.e.~for every finite extension $L/K$,
$$
 [L:K]=[Lv:Kv]\cdot[vL:vK].
$$

\begin{proposition}\label{fact:tamification}
Let $(K,v)$ be a valued field. There exists an extension $(K^{t},v^t)$ of $(K,v)$ such that $(K^t,v^t)$ is tame,
$K^t$ is perfect, $v^tK^{t}=\frac{1}{p^{\infty}}vK$, and $K^{t}v^t=Kv^{\mathrm{perf}}$.
\end{proposition}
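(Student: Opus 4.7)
The plan is to construct $(K^t, v^t)$ as a maximal algebraic extension of the henselization of $(K, v)$ that has the prescribed value group and residue field. The case of residue characteristic zero is essentially trivial, since then the henselization itself is tame and both the perfect hull and the $\frac{1}{p^\infty}$-divisible hull act as identities; so I focus on residue characteristic $p > 0$.

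First, I would replace $(K, v)$ by its henselization $(K^h, v^h)$, which preserves both the value group and the residue field and guarantees a unique prolongation of the valuation to every algebraic extension. Fix an algebraic closure $(\bar{K}, \bar{v})$ of $(K^h, v^h)$ carrying the unique extension of $v^h$, and let $K_1 := (K^h)^{\mathrm{perf}} \subseteq \bar{K}$ be the perfect hull. Since $K_1/K^h$ is purely inseparable, computing values and residues of $p$-power roots immediately gives $vK_1 = \frac{1}{p^\infty} vK$ and $K_1\bar{v} = Kv^{\mathrm{perf}}$. Then, by Zorn's lemma, I would choose a subfield $K^t \subseteq \bar{K}$ maximal with respect to containing $K_1$ and satisfying $vK^t = \frac{1}{p^\infty} vK$ and $K^t\bar{v} = Kv^{\mathrm{perf}}$, and set $v^t := \bar{v}|_{K^t}$.

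By construction $(K^t, v^t)$ is henselian (as an algebraic extension of the henselian field $K^h$ with unique prolongation of the valuation), $K^t$ is perfect (any algebraic extension of the perfect field $K_1$ is perfect, since Frobenius is surjective on $\bar{K}$), the value group is $p$-divisible, and the residue field is perfect. The remaining and nontrivial content is to verify defectlessness. Here the decisive point is that maximality of $K^t$ inside $\bar{K}$ forces $(K^t, v^t)$ to be \emph{algebraically maximal}: no proper algebraic extension of $K^t$ inside $\bar{K}$ can be immediate. A standard argument, reducing via the Galois closure and using Ostrowski's Lemma (defect is a power of $p$ in characteristic $p$), then shows that any finite extension of a henselian field carrying defect must contain a proper immediate subextension; algebraic maximality rules this out, yielding defectlessness and hence tameness.

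The main obstacle is this last step: inferring defectlessness from algebraic maximality together with $p$-divisibility of the value group and perfectness of $K^t$ and its residue field. This is the substantive input, belonging to the theory of tame valued fields, and draws on Kuhlmann's framework cited earlier in the excerpt; the earlier steps, by contrast, amount to purely formal constructions of henselization and perfect hull with a tracking of the effect on value group and residue field.
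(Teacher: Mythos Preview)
Your route differs from the paper's and is less elementary. For the equicharacteristic case (the only one actually used later), the paper simply takes a maximal immediate extension of $K^{\mathrm{perf}}$: Krull's theorem supplies existence, and maximality immediately yields henselian and defectless, so tameness follows without further work. Your construction instead produces an \emph{algebraically} maximal field inside $\bar{K}$ and then must invoke the implication ``henselian, algebraically maximal, $p$-divisible value group, perfect residue field $\Rightarrow$ defectless''. That implication is indeed a theorem in Kuhlmann's framework, but your sketch of it is not right as stated: the claim that any finite extension of a henselian field carrying defect must contain a proper immediate subextension is false in general --- there exist algebraically maximal henselian fields that are not defectless. The argument genuinely needs the perfectness of $K^t$ and the $p$-divisibility of $vK^t$, not merely Ostrowski's lemma and passage to the Galois closure; so you should cite the relevant Kuhlmann result rather than suggest it is a routine reduction. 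The upside of your approach is that it yields $K^t$ algebraic over $K$, which the paper's equicharacteristic argument does not.

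There is also a gap in mixed characteristic. When $\mathrm{char}(K)=0$ and $\mathrm{char}(Kv)=p$, the field $K^h$ is already perfect, so $K_1=(K^h)^{\mathrm{perf}}=K^h$ and hence $vK_1=vK$ and $K_1\bar{v}=Kv$; your claimed equalities $vK_1=\frac{1}{p^\infty}vK$ and $K_1\bar{v}=Kv^{\mathrm{perf}}$ fail, and the Zorn step has no field with the required invariants to start from. The paper handles this case by citing Kuhlmann--Pank--Roquette, where the desired algebraic extension is constructed directly.
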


\begin{proof}
In the special case ${\rm char}(K)={\rm char}(Kv)$, any maximal immediate extension of $K^{\rm perf}$ satisfies the claim.
In general, \cite[Theorem 2.1, Proposition 4.1, and Proposition 4.5(i)]{Kuhlmann-Pank-Roquette86} gives such a $K^t$ that is in addition algebraic over $K$.
\end{proof}

\section{Model theory of valued fields}

\noindent
Let 
$$
 \mathcal{L}_{\mathrm{ring}}=\{+,-,\cdot,0,1\}
$$ 
be the language of rings and let 
$$
 \mathcal{L}_{\rm vf}=\{+^K,-^K,\cdot^K,0^K,1^K,+^k,-^k,\cdot^k,0^k,1^k,+^\Gamma,<^\Gamma,0^\Gamma,\infty^\Gamma,v,{\rm res}\}
$$ 
be a three sorted language for valued fields
(like the Denef-Pas language, but without an angular component)
with a sort $K$ for the field itself, a sort $\Gamma\cup\{\infty\}$ for the value group with infinity, and a sort $k$ for the residue field,
as well as both the valuation map $v$ and the residue map ${\rm res}$,
which we interpret as the constant $0^k$ map outside the valuation ring. 
For a field $C$, we let $\mathcal{L}_{\mathrm{ring}}(C)$ and $\mathcal{L}_{\rm vf}(C)$ be the languages obtained by adding symbols for elements of $C$. In the case of $\mathcal{L}_{\rm vf}(C)$, the constant symbols are added to the field sort $K$.

A valued field $(K,v)$ gives rise in the usual way to an $\mathcal{L}_{\rm vf}$-structure 
$$
 (K,vK\cup\{\infty\},Kv,v,\mathrm{res}),
$$
where $vK$ is the value group, $Kv$ is the residue field, and $\mathrm{res}$ is the residue map. For notational simplicity, we will usually write $(K,v)$ to refer to the $\mathcal{L}_{\rm vf}$-structure it induces. For further notational simplicity, we write $(K,D)$ instead of $(K,(d_{c})_{c\in C})$, where $D=\{d_{c}|c\in C\}$ is the set of interpretations of the constant symbols. Combining these two simplifications, we write $(K,v,D)$ for the $\mathcal{L}_{\rm vf}(C)$-structure 
$$
 (K,vK\cup\{\infty\},Kv,v,\mathrm{res},(d_{c})_{c\in C}).
$$ 
We also write $Dv$ for the set of residues of elements from $D$.

As usual, we say that an $\mathcal{L}_{\rm vf}(C)$-formula is an \em $\exists$-formula \rm if it is logically equivalent
to a formula in prenex normal form with only existential quantifiers (over any of the three sorts).
We say that an $\mathcal{L}_{\rm vf}(C)$-sentence is an \em $\forall^{k}\exists$-sentence \rm if it is logically equivalent to 
a sentence of the form $\forall\mathbf{x}\,\psi(\mathbf{x})$,
where 
$\psi$ is an $\exists$-formula and
the universal quantifiers range over the residue field sort.

Let $(K,v,D)\subseteq(L,w,E)$ be an extension of $\mathcal{L}_{\rm vf}(C)$-structures.
Note that $d_{c}=e_{c}$, for all $c\in C$.
We say that certain $\mathcal{L}_{\rm vf}(C)$-sentences $\phi$ \em go up \rm from $K$ to $L$ if $(K,v,D)\models\phi$
implies that $(L,w,E)\models\phi$.
For examples, $\exists$-sentences always go up every extension. Furthermore, if $(L,w)/(K,v)$ is an extension of valued fields such that $Lw/Kv$ is trivial, then $\forall^{k}\exists$-$\mathcal{L}_{\rm vf}(K)$-sentences go up from $(K,v)$ to $(L,w)$. Although the previous statement is not referenced directly, it underlies many of the arguments in Section \ref{sec:transfer}.

\begin{lemma}\label{lem:ex.closed.perf}
Let $L/K$ be an extension of fields.
If $K\preceq_\exists L$, then $K^{\rm perf}\preceq_\exists L^{\rm perf}$.
\end{lemma}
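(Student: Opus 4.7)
The plan is to use the Frobenius endomorphism to transfer existential statements between the perfect hulls and the underlying fields. If $\mathrm{char}(K) = 0$ then $K^{\rm perf} = K$ and $L^{\rm perf} = L$ and the conclusion is immediate from the hypothesis, so I would assume $\mathrm{char}(K) = p > 0$. Given an $\exists$-$\mathcal{L}_{\mathrm{ring}}(K^{\rm perf})$-sentence $\exists \mathbf{y}\, \psi(\mathbf{a}, \mathbf{y})$ holding in $L^{\rm perf}$, with $\psi$ quantifier-free (and, after normalising, a conjunction of polynomial equations and inequations with integer coefficients) and $\mathbf{a} \in (K^{\rm perf})^k$, the first step is to absorb the parameters by writing $\mathbf{a} = \boldsymbol{\alpha}^{1/p^n}$ componentwise for some $\boldsymbol{\alpha} \in K^k$ and $n \geq 0$. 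Picking a witness $\mathbf{b} \in (L^{\rm perf})^l$ and enlarging $n$ if necessary, I can likewise write $\mathbf{b} = \boldsymbol{\beta}^{1/p^n}$ for some $\boldsymbol{\beta} \in L^l$.

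The key step is the Frobenius identity $f(\mathbf{c}, \mathbf{d})^{p^n} = f(\mathbf{c}^{p^n}, \mathbf{d}^{p^n})$, valid for any $f \in \mathbb{Z}[\mathbf{x}, \mathbf{y}]$ over a field of characteristic $p$, applied atomwise to each polynomial equation and---since we work in an integral domain---each negated equation appearing in $\psi$. This converts the truth of $\psi(\boldsymbol{\alpha}^{1/p^n}, \boldsymbol{\beta}^{1/p^n})$ in $L^{\rm perf}$ into the truth of $\psi(\boldsymbol{\alpha}, \boldsymbol{\beta})$, which is a statement about elements of $L$ and therefore holds in $L$. Hence $L \models \exists \mathbf{y}\, \psi(\boldsymbol{\alpha}, \mathbf{y})$, an existential sentence with parameters in $K$.

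Finally, the hypothesis $K \preceq_\exists L$ supplies a witness $\boldsymbol{\gamma} \in K^l$ of $\psi(\boldsymbol{\alpha}, \boldsymbol{\gamma})$. Reversing the Frobenius translation, $\boldsymbol{\gamma}^{1/p^n} \in (K^{\rm perf})^l$ then witnesses $\psi(\mathbf{a}, \mathbf{y})$ in $K^{\rm perf}$, completing the argument. The only things to verify carefully are that a single exponent $n$ can be taken uniformly large enough for both parameters and witness, and that the Frobenius translation respects negated atomic formulas---both are routine in an integral domain---so I do not anticipate any real obstacle.
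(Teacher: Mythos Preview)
Your proof is correct and is essentially the same as the paper's: both use the $p^{n}$-th power Frobenius to transport an existential statement with parameters in $K^{p^{-n}}$ and a witness in $L^{p^{-n}}$ down to one with parameters in $K$ and a witness in $L$, apply $K\preceq_\exists L$, and then transport back. The paper compresses this into the one-line observation that Frobenius yields $K^{p^{-n}}\preceq_\exists L^{p^{-n}}$ for each $n$ and that the perfect hulls are the directed unions of these, while you unfold the argument at the level of individual formulas; the content is identical.
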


\begin{proof}
This is clear, since $K^{\rm perf}=\bigcup_n K^{p^{-n}}$ and $L^{\rm perf}=\bigcup_n L^{p^{-n}}$,
and the Frobenius gives that $K^{p^{-n}}\preceq_\exists L^{p^{-n}}$ for all $n$.
\end{proof}

In \cite{Kuhlmann??}, F.-V. Kuhlmann studies the model theory of tame fields:

\begin{proposition}\label{fact:relative.embedding.tame}
The elementary class of tame fields has the Relative Embedding Property. I.e. for tame fields $(K,v)$ and $(L,w)$ with common subfield $(F,u)$, if
\begin{enumerate}
\item $(F,u)$ is defectless,
\item $(L,w)$ is $|K|^{+}$-saturated,
\item $vK/uF$ is torsion-free and $Kv/Fu$ is separable, and
\item there are embeddings $\rho:vK\longrightarrow wL$ (over $uF$) and $\sigma:Kv\longrightarrow Lw$ (over $Fu$);
\end{enumerate}
then there exists an embedding $\iota:(K,v)\longrightarrow(L,w)$ over $(F,u)$ which respects $\rho$ and $\sigma$.
\end{proposition}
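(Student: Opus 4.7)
The plan is a Zorn's Lemma maximality argument. Call a valued subfield $(M,v|_M)$ with $F\subseteq M\subseteq K$ \emph{good} if there is an embedding $\iota_M:(M,v|_M)\hookrightarrow(L,w)$ over $(F,u)$ whose restrictions to the value group and residue field agree with those of $\rho$ and $\sigma$ respectively. The collection of good pairs $(M,\iota_M)$ is nonempty (take $M=F$ with the inclusion) and closed under unions of chains, so by Zorn's Lemma a maximal good pair $(M,\iota_M)$ exists. I would show $M=K$ by contradiction: pick $x\in K\setminus M$ and extend $\iota_M$ to an embedding of $M(x)$ into $L$, contradicting maximality. Before the extension step I would also observe that the hypotheses of the proposition still hold with $M$ in place of $F$, so we may treat $M$ as the new base.

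The extension step divides by the behaviour of $x$ over $M$.

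\textbf{Case A (value-transcendental).} If $vx\notin vM\otimes\mathbb{Q}$ inside $vK\otimes\mathbb{Q}$, then $M(x)v=Mv$ and $vM(x)=vM\oplus\mathbb{Z}\cdot vx$. The $|K|^+$-saturation of $(L,w)$ lets me realise the partial type over $\iota_M(M)$ that demands an element $y\in L$, transcendental over $\iota_M(M)$, with $wy=\rho(vx)$; this type is consistent because $\rho(vx)\in wL$ is $\mathbb{Q}$-independent from $w\iota_M(M)$. Extending by $x\mapsto y$ then automatically preserves the valuation.

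\textbf{Case B (residue-transcendental).} After rescaling by an element of $M^\times$ of appropriate value, reduce to $vx=0$ with $xv$ transcendental over $Mv$. Saturation furnishes $y\in L$ with $wy=0$ and $yw=\sigma(xv)$, and I extend by $x\mapsto y$.

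\textbf{Case C (algebraic).} Tameness of $(K,v)$ gives defectlessness, so $[M(x):M]=[M(x)v:Mv]\cdot[vM(x):vM]$. I would split $x$ into (i) a separable residue-field enlargement, lifted by Hensel's Lemma from $\sigma$, with separability of $Kv/Fu$ from (3) ensuring we stay in the separable world; (ii) a value-group enlargement with torsion-free quotient, realised via $\rho$ and a further Hensel lift, using (3); and (iii) purely inseparable parts, absorbed by the perfectness of the residue fields built into tameness and the uniqueness of $p^n$th roots of prescribed value in tame fields.

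The main obstacle is Case C: one must verify that no immediate defect intervenes and that hypotheses (1)--(4) persist after each extension. Defectlessness of $(F,u)$ from (1), together with tameness of $(K,v)$, rules out proper algebraic immediate extensions of $(M,v|_M)$ inside $K$ beyond the henselisation, and tameness of $(L,w)$ combined with $|K|^+$-saturation supplies matching algebraic elements on the $L$-side for each of the three finitary building blocks. Checking that the new $M$ continues to satisfy (1)--(4)---in particular that the enlarged value-group and residue-field extensions remain torsion-free and separable over the image and that $(M,v|_M)$ remains defectless---then keeps the induction alive, and maximality forces $M=K$, yielding the desired $\iota$.
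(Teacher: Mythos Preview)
The paper does not prove this proposition at all: it simply cites \cite[Theorem~7.1]{Kuhlmann??} and remarks on the choice of language. So there is nothing to compare your argument against here; you are attempting a self-contained proof of a result the authors import as a black box.

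Your overall architecture---Zorn's Lemma over good pairs $(M,\iota_M)$, followed by a case split on the nature of $x\in K\setminus M$---is indeed the shape of Kuhlmann's own proof. But your case analysis is incomplete in a way that hides exactly the hard part of the theorem. Cases~A and~B (value- and residue-transcendental) and Case~C (algebraic) do not exhaust the possibilities: there is a fourth case, where $x$ is transcendental over $M$ yet the extension $(M(x),v)/(M,v)$ is \emph{immediate}, i.e.\ $vM(x)=vM$ and $M(x)v=Mv$. In this situation neither $\rho$ nor $\sigma$ tells you anything about where to send $x$, and you cannot simply rescale into Case~B. This is precisely where the tameness hypothesis does real work: tame fields are algebraically maximal, so $x$ is a pseudo-limit of a pseudo-Cauchy sequence of transcendental type in $M$, and one uses saturation of $(L,w)$ to realise the corresponding approximation type over $\iota_M(M)$. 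Without this ingredient the maximality argument stalls.

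There is a second gap in Case~C. You invoke ``tameness of $(K,v)$ gives defectlessness'' to conclude $[M(x):M]=e\cdot f$, but defectlessness is a property of the base field, not of the overfield: what you need is that $(M,v|_M)$ is defectless, and this does not follow from $(K,v)$ being tame. It is true for $M=F$ by hypothesis~(1), but you must check that it is \emph{preserved} along the Zorn chain---which is itself a nontrivial step in Kuhlmann's argument, relying on the structure theory of finite defectless extensions and the specific way the intermediate $M$ is built up. Your closing paragraph gestures at this (``hypotheses (1)--(4) persist after each extension''), but the claim that defectlessness of $F$ together with tameness of $K$ ``rules out proper algebraic immediate extensions of $(M,v|_M)$ inside $K$ beyond the henselisation'' is not in general correct without further argument.
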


\begin{proof}
See \cite[Theorem 7.1]{Kuhlmann??}.
(Note that this result is stated in the language 
$$
 \mathcal{L}_{\rm vf}'=\{+,-,\cdot,{}^{-1},0,1,O\},
$$ 
where $O$ is a binary predicate which is interpreted in a valued field $(K,v)$ so that $O(a,b)$ if and only if $va\geq vb$. However, the exact choice of language does not directly affect us.)
\end{proof}

From \autoref{fact:relative.embedding.tame}, Kuhlmann deduces the following AKE principle:

\begin{theorem}\label{fact:AKE.tame}
The class of tame fields is an 
$\mathrm{AKE}^{\preceq}$-class:
If $(L,w)/(K,v)$ is an extension of tame fields with $vK\preceq wL$ and $Kv\preceq Lw$, then
$(K,v)\preceq(L,w)$.
\end{theorem}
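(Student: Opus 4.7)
The plan is to iterate Proposition~\ref{fact:relative.embedding.tame} in a back-and-forth construction to build a common elementary extension of $(K,v)$ and $(L,w)$, from which $(K,v)\preceq(L,w)$ follows immediately.

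First I would check that the side hypotheses of Proposition~\ref{fact:relative.embedding.tame} hold in our setting. Tameness gives that $(K,v)$ is defectless and that $Kv$ is perfect, the latter forcing $Lw/Kv$ to be separable. The elementarity $vK\preceq wL$ forces $wL/vK$ to be torsion-free: if $n\gamma\in vK$ for some $\gamma\in wL$, then $\exists x\,(nx=n\gamma)$ holds in $wL$ and hence in $vK$, and torsion-freeness of $wL$ does the rest.

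Next I would take a $|L|^+$-saturated elementary extension $(M_1,u_1)\succeq(K,v)$, which is again tame since the class of tame fields is elementary. Applying standard saturation arguments sortwise, one upgrades the assumptions $vK\preceq wL$ and $Kv\preceq Lw$ to elementary embeddings $\rho\colon wL\preceq u_1M_1$ over $vK$ and $\sigma\colon Lw\preceq M_1u_1$ over $Kv$. Proposition~\ref{fact:relative.embedding.tame} then supplies an embedding $\iota_1\colon(L,w)\hookrightarrow(M_1,u_1)$ over $(K,v)$ compatible with $\rho$ and $\sigma$. I would then alternate: take a $|M_1|^+$-saturated elementary extension $(N_1,u_1')\succeq(L,w)$, upgrade the sort-level embeddings again, and apply Proposition~\ref{fact:relative.embedding.tame} with common subfield $(L,w)$ to obtain $(M_1,u_1)\hookrightarrow(N_1,u_1')$ over $(L,w)$. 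Iterating produces an ascending chain
$$(K,v)\subseteq(L,w)\subseteq(M_1,u_1)\subseteq(N_1,u_1')\subseteq(M_2,u_2)\subseteq\cdots$$
with $(K,v)\preceq(M_i,u_i)$ and $(L,w)\preceq(N_i,u_i')$ for every $i$. By the elementary chain lemma, its union $(M,u)$ elementarily extends both $(K,v)$ and $(L,w)$; combined with the inclusions $(K,v)\subseteq(L,w)\subseteq(M,u)$, this forces $(K,v)\preceq(L,w)$.

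The main obstacle is sustaining the iteration: Proposition~\ref{fact:relative.embedding.tame} requires the torsion-free and separability hypotheses for each new pair, which need not hold when the relevant sort-level inclusions are merely embeddings. Arranging the sort embeddings $\rho$ and $\sigma$ to be elementary at each round (via saturation) is the essential trick, because this ensures that in the next round the value-group quotient is again torsion-free by the same argument as in the base step, while the residue-field side stays separable because residue fields of tame valued fields are perfect.
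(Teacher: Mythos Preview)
The paper does not give its own proof of this theorem; it simply cites \cite[Theorem~1.4]{Kuhlmann??}. Your back-and-forth derivation from the Relative Embedding Property (Proposition~\ref{fact:relative.embedding.tame}) is precisely the standard route and is how Kuhlmann himself obtains the $\mathrm{AKE}^{\preceq}$ statement in the cited reference, so there is no genuine contrast to draw: you have filled in what the paper left as a citation.

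Your sketch is correct. A couple of points worth making explicit when you write it up: first, the class of tame fields is indeed elementary (this is also in \cite{Kuhlmann??}), which you need so that the saturated extensions $(M_i,u_i)$ and $(N_i,u_i')$ remain tame and Proposition~\ref{fact:relative.embedding.tame} continues to apply. Second, the bookkeeping in the chain requires that at stage $i+1$ the embedding is taken \emph{over} the image of the previous embedding, so that the compositions $M_i\hookrightarrow N_i\hookrightarrow M_{i+1}$ and $N_i\hookrightarrow M_{i+1}\hookrightarrow N_{i+1}$ agree with the chosen elementary inclusions $M_i\preceq M_{i+1}$ and $N_i\preceq N_{i+1}$; this is what makes the Tarski--Vaught chain argument go through at the end. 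You clearly have this in mind, but it should be stated. Your handling of the side hypotheses---torsion-freeness via elementarity of the value-group extension, separability via perfectness of the residue field---is exactly right, and the observation that arranging $\rho$ and $\sigma$ to be elementary at each round is what propagates these hypotheses is the key insight.
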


\begin{proof}
See \cite[Theorem 1.4]{Kuhlmann??}.
\end{proof}

\section{Power series fields}

\noindent
For a field $F$ and an ordered abelian group $\Gamma$ we denote by $F((\Gamma))$ the field
of generalized power series with coefficients in $F$ and exponents in $\Gamma$, see e.g.~\cite[\S4.2]{Efrat}.
We identify $F((\mathbb{Z}))$ with the field of formal power series $F((t))$ 
and denote the power series valuation on any subfield of any $F((\Gamma))$ by $v_t$.

\begin{lemma}\label{lem:Hahn.tame}\label{lem:Hahn.maximal}
A field $(F((\Gamma)),v_{t})$ of generalized power series is maximal.
In particular, it is tame if and only if $F$ is perfect and $\Gamma$ is $p$-divisible.
\end{lemma}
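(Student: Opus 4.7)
The plan has two pieces, matching the two assertions. First I would prove maximality of the generalized power series field, and then deduce the characterization of tameness from maximality together with the definition.

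For maximality, the cleanest route is to quote the classical theorem of Krull--Kaplansky that Hahn series fields $F((\Gamma))$ are maximally valued; a convenient reference in the bibliography already used is \cite[\S4.2]{Efrat}. If one wanted to give the argument in situ, the idea is standard: given a pseudo-Cauchy sequence $(a_\lambda)_{\lambda<\mu}$ in $F((\Gamma))$ whose breadth strictly decreases, one builds a well-ordered support by taking, at each stage, the leading coefficient of the difference $a_{\lambda+1}-a_\lambda$; the resulting formal series lies in $F((\Gamma))$ because well-ordered unions of well-ordered subsets of $\Gamma$ are well-ordered, and it realizes the pseudo-limit. Either way, the conclusion is that $(F((\Gamma)),v_t)$ admits no proper immediate extension.

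For the "in particular" clause, I would use the following consequences of maximality, each of which is classical (and can be cited from, e.g., \cite{Engler-Prestel05}): every maximal valued field is henselian, and every maximal valued field is defectless. The residue field and value group of $(F((\Gamma)),v_t)$ are $F$ and $\Gamma$ respectively, by the definition of the power series valuation. Hence, of the four conditions in the definition of a tame field, henselianity and defectlessness hold automatically, while the remaining two --- $\Gamma$ is $p$-divisible and $F$ is perfect --- are exactly the conditions on the value group and residue field. This gives the "if and only if" immediately.

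The main obstacle, if one does not wish to invoke the Krull--Kaplansky theorem as a black box, is the construction of the pseudo-limit inside $F((\Gamma))$, which requires a careful ordinal induction to control the support. Since an appropriate reference is available, I would not reproduce this argument in detail; the rest of the proof is a direct unpacking of definitions.
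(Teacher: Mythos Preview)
Your proposal is correct and matches the paper's approach almost exactly: the paper simply cites \cite[Theorem 18.4.1]{Efrat} for maximality and then notes that maximal implies henselian and defectless, leaving perfection of $F$ and $p$-divisibility of $\Gamma$ as the only remaining tameness conditions. The one minor point is that your reference to \S4.2 of Efrat is where the paper locates the \emph{definition} of $F((\Gamma))$; the maximality statement itself is Theorem~18.4.1 there.
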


\begin{proof}
See \cite[Theorem 18.4.1]{Efrat}
and note that maximal implies henselian and defectless. 
\end{proof}

\begin{proposition}\label{fact:Serre}
Let $A$ be a complete discrete (i.e. with value group $\mathbb{Z}$) equicharacteristic valuation ring. Let $F\subseteq A$ be a set of representatives for the residue classes which forms a field. Let $s\in A$ be a uniformiser (i.e. an element of least positive value). Then $A$ is isomorphic to $F[[s]]$ by an isomorphism which fixes $F$ pointwise.
\end{proposition}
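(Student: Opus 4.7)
The plan is to construct an explicit ring homomorphism $\phi\colon F[[s]]\to A$ fixing $F$ pointwise and verify that it is bijective. For a formal series $\sum_{i=0}^{\infty} a_{i} s^{i}\in F[[s]]$, note that $a_{i}\in F\subseteq A$ and $v(a_{i}s^{i})\geq i$, so the partial sums form a $v$-adically Cauchy sequence in $A$. Completeness of $A$ lets me define $\phi(\sum a_{i} s^{i})$ as the limit of these partial sums; continuity of the ring operations makes $\phi$ a ring homomorphism, and since a constant series maps to itself, $\phi$ restricts to the identity on $F$.

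For injectivity, I would suppose $\phi(\sum a_{i} s^{i})=0$ with some $a_{i}\neq 0$ and take $i_{0}$ minimal with $a_{i_{0}}\neq 0$. Because $F$ is a system of representatives forming a field, the residue of $a_{i_{0}}$ in $A/sA$ is nonzero, so $a_{i_{0}}$ is a unit of $A$, giving $v(a_{i_{0}}s^{i_{0}})=i_{0}$, whereas $v(a_{i}s^{i})\geq i_{0}+1$ for $i>i_{0}$. The ultrametric inequality then yields $v(\sum a_{i} s^{i})=i_{0}<\infty$, contradicting that the sum vanishes.

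For surjectivity, given $a\in A$ I would build coefficients $a_{n}\in F$ inductively: at step $n$, write $a-\sum_{i<n}a_{i}s^{i}=s^{n} b_{n}$ with $b_{n}\in A$, and let $a_{n}\in F$ be the unique representative with $b_{n}\equiv a_{n}\pmod{sA}$. Then $a-\sum_{i=0}^{n}a_{i}s^{i}\in s^{n+1}A$, so the partial sums converge to $a$ in the $v$-adic topology, which gives $a=\phi(\sum a_{i}s^{i})$.

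The only real content beyond bookkeeping is the interplay of completeness with the discrete valuation, together with the hypothesis that $F$ is a set of residue representatives forming a field (so that the coefficient choices are both available and unique). Accordingly, I do not expect any step to be a genuine obstacle: the homomorphism is forced by sending $s\mapsto s$ and fixing $F$, and the valuation does all the work in both directions.
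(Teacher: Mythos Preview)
Your argument is correct and is precisely the classical proof: the paper itself does not give an argument but simply cites Serre's \emph{Local Fields}, Ch.~2, Prop.~5, and your construction of the map $F[[s]]\to A$ via $v$-adic limits, together with the coefficient-by-coefficient expansion for surjectivity, is exactly what one finds there. There is nothing to add.
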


\begin{proof}
See \cite{Serre79}, Ch.~2 Prop.~5 and discussion following the example.
\end{proof}

\begin{corollary}\label{prp:finite.extensions.of.power.series}
Let $F$ be a field and let $E/F((t))$ be a finite extension such that $Ev_t=F$. 
Then $(E,v_t,F)$ is isomorphic to $(F((s)),v_s,F)$. 
This applies in particular to finite extensions of $F((t))$ inside $F((\mathbb{Q}))$.
\end{corollary}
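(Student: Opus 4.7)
The plan is to reduce to Proposition \ref{fact:Serre} applied to the valuation ring $\mathcal{O}$ of $(E,v_t)$. To do so, I need to verify that $\mathcal{O}$ is a complete equicharacteristic discrete valuation ring and that $F\subseteq\mathcal{O}$ is a field of representatives.

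First, since $F((t))$ is complete (hence henselian) and $E/F((t))$ is finite, the valuation $v_t$ extends uniquely to $E$ and $(E,v_t)$ is again complete. The fundamental inequality, together with the hypothesis $Ev_t = F = F((t))v_t$, forces the residue degree to be $1$, so $[E:F((t))]$ equals the ramification index $e$ and $v_tE = \tfrac{1}{e}\mathbb{Z}\cong\mathbb{Z}$. Thus $\mathcal{O}$ is a complete discrete valuation ring, and it is equicharacteristic because $F((t))$ is. Since $F\subseteq F((t))\subseteq E$ and $v_t$ is trivial on $F$, we have $F\subseteq\mathcal{O}$, and the residue map restricts to an isomorphism $F\to Ev_t = F$, so $F$ is a field of residue-class representatives in $\mathcal{O}$. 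Choosing any $s\in E$ with $v_t(s)$ minimal positive gives a uniformiser.

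Proposition \ref{fact:Serre} then yields an isomorphism $\mathcal{O}\cong F[[s]]$ fixing $F$ pointwise, and passing to fraction fields and noting that $v_t$ is determined by $\mathcal{O}$ gives the desired $\mathcal{L}_{\rm vf}(F)$-isomorphism $(E,v_t,F)\cong(F((s)),v_s,F)$.

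For the last sentence, if $E$ is a finite extension of $F((t))$ with $F((t))\subseteq E\subseteq F((\mathbb{Q}))$, then since the residue field of $(F((\mathbb{Q})),v_t)$ is $F$, we have $F\subseteq Ev_t\subseteq F((\mathbb{Q}))v_t = F$, so $Ev_t=F$ and the hypotheses of the corollary are satisfied. There is no real obstacle here; the only thing to be mildly careful about is that the equicharacteristic hypothesis required by Serre's result is automatic, being inherited from $F((t))$.
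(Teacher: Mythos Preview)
Your proof is correct and follows essentially the same approach as the paper: verify that $(E,v_t)$ is a complete discrete equicharacteristic valued field with $F$ as a field of representatives, then invoke Proposition~\ref{fact:Serre}. The paper's version is terser (it cites Serre for the fact that a finite extension of a complete discrete valued field is again complete discrete), while you spell out the details; one small remark is that the fundamental inequality alone only gives $e\leq [E:F((t))]$, not equality, but this does not matter since all you need for discreteness is that $e$ is finite, which already forces $v_tE=\tfrac{1}{e}\mathbb{Z}\cong\mathbb{Z}$.
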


\begin{proof}
We are already provided with a section since $F\subseteq F((t))\subseteq E$ and $Ev_t=F$. 
Since $E/F((t))$ is finite, $E$ is also a complete discrete equicharacteristic valued field (cf.~\cite[Ch.~2 Prop.~3]{Serre79}). By \autoref{fact:Serre}, there is an $F$-isomorphism of valued fields $E\longrightarrow F((s))$.
\end{proof}

\begin{definition}
We denote by 
$F(t)^h$ the henselization of $F(t)$ with respect to $v_t$, i.e.\ the relative algebraic closure of $F(t)$ in $F((t))$,
and by $F((t))^{\mathbb{Q}}$ the relative algebraic closure of $F((t))$ in $F((\mathbb{Q}))$.
\end{definition}

\begin{lemma}\label{lem:F(t)h.ex.closed}
For any field $F$ we have $(F(t)^{h},v_{t})\preceq_{\exists}(F((t)),v_{t})$.
\end{lemma}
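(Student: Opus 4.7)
The plan is to show $(F(t)^{h},v_{t})\preceq_{\exists}(F((t)),v_{t})$ by embedding $(F((t)),v_{t})$, over $F(t)^h$, into a sufficiently saturated elementary extension $(K^{*},v^{*})$ of $(F(t)^h,v_{t})$ as an $\mathcal{L}_{\rm vf}$-structure. Any existential $\mathcal{L}_{\rm vf}(F(t)^h)$-sentence $\phi$ satisfied in $F((t))$ then transfers up along the embedding to $K^{*}$ and down to $F(t)^h$ by elementarity.

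To construct the embedding I pass to tame hulls and apply the Relative Embedding Property. Fix $(K^{*},v^{*})$ sufficiently saturated, and via \autoref{fact:tamification} build tame extensions $(L^{t},w)$ of $(F((t)),v_{t})$ and $(N,w')$ of $(K^{*},v^{*})$, both with value group $\frac{1}{p^{\infty}}\mathbb{Z}$ and residue field $F^{\mathrm{perf}}$, choosing $K^{*}$ large enough that $(N,w')$ can be taken to be $|L^{t}|^{+}$-saturated in $\mathcal{L}_{\rm vf}$. A tame hull $M$ of $(F(t)^h,v_{t})$ can be placed inside both $L^{t}$ (via $F(t)^h\subseteq F((t))\subseteq L^{t}$) and $N$ (via $F(t)^h\subseteq K^{*}\subseteq N$). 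Since $M$, $L^{t}$, and $N$ share value group $\frac{1}{p^{\infty}}\mathbb{Z}$ and residue field $F^{\mathrm{perf}}$, the torsion-freeness of $wL^{t}/uM$ and separability of $L^{t}w/Mu$ required by \autoref{fact:relative.embedding.tame} hold trivially, and $M$ is tame hence defectless. Thus \autoref{fact:relative.embedding.tame} yields an $\mathcal{L}_{\rm vf}(M)$-embedding $L^{t}\hookrightarrow N$, which restricts to an $\mathcal{L}_{\rm vf}(F(t)^h)$-embedding $F((t))\hookrightarrow N$.

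The main obstacle is that the witness of $\phi$ produced in this way lives inside $N$, not inside $K^{*}$, and the tame-hull extension $K^{*}\subseteq N$ is not in general $\exists$-conservative (for instance $N\models \exists x\, x^{p}=t$ while $K^{*}$ need not). To pull the witness back into $K^{*}$, I would use the special structure of $F((t))$: it is a complete discretely valued field with value group $\mathbb{Z}$ and residue field $F$, both agreeing elementarily with those of $K^{*}$. By \autoref{fact:Serre} (compare also \autoref{prp:finite.extensions.of.power.series}), $F((t))$ is determined as an $\mathcal{L}_{\rm vf}(F(t))$-structure by a choice of a section of its residue map and a uniformizer. The uniformizer $t$ already sits in $F(t)^h\subseteq K^{*}$; the section $F\subseteq F(t)^h\subseteq K^{*}$ is present and can be extended if needed via \autoref{lem:section}; and the saturation of $K^{*}$ supplies the $t$-adic limits of partial sums of power series. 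Combining these, one obtains a direct $\mathcal{L}_{\rm vf}(F(t)^h)$-embedding $F((t))\hookrightarrow K^{*}$, sidestepping the tame-hull detour and completing the argument.
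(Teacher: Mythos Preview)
The paper does not give its own proof here; it simply cites \cite[Theorem~5.12]{Kuhlmann??}. Your attempt at a self-contained argument has a genuine gap in the final step. After correctly noting that the tame-hull route only lands the witness in $N$ rather than in $K^*$, you claim that saturation of $K^*$ directly furnishes an $\mathcal{L}_{\rm vf}(F(t)^h)$-embedding $F((t))\hookrightarrow K^*$ by supplying ``$t$-adic limits of partial sums''. But realizing, for each $a=\sum a_nt^n$, the type $\{v(x-s_N)\geq N:N\in\mathbb{N}\}$ does \emph{not} by itself yield a valued-field embedding. The value group $v^*K^*$ is a nonstandard model of $\mathrm{Th}(\mathbb{Z})$, so for a tuple $\mathbf{b}$ realizing the joint partial-sum type of some $\mathbf{a}\in F((t))^n$, a polynomial relation $p(\mathbf{a})=0$ only forces $v^*(p(\mathbf{b}))>N$ for every standard $N$, not $p(\mathbf{b})=0$; hence independently chosen ``limits'' need not assemble into a ring homomorphism. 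One might try to repair this by first embedding a transcendence basis (where Kaplansky's theory of pseudo-Cauchy sequences of transcendental type does apply) and then extending to algebraic elements via henselianity of $K^*$; but in positive characteristic a finitely generated immediate subextension of $F((t))/F(t)^h$ need not be contained in the henselization of any chosen transcendence basis. This is exactly the defect obstruction that Kuhlmann's embedding lemmas are designed to handle, and \autoref{fact:Serre} --- which only identifies a complete equicharacteristic DVR up to isomorphism once a section and uniformizer are fixed --- says nothing about embedding it into an arbitrary henselian target.

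There are also smaller unjustified steps in the tame-hull paragraph you ultimately set aside: a tame extension as produced by \autoref{fact:tamification} is not canonical, so the claim that one common $M$ can be placed inside both $L^t$ and $N$ over $F(t)^h$ needs argument; and there is no reason a tame hull of a saturated $K^*$ should itself be saturated.
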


\begin{proof}
See \cite[Theorem 5.12]{Kuhlmann??}.
\end{proof}

The following proposition may be deduced from the more general \cite[Lemma 3.7]{Kuhlmann??}, but we give a proof in this special case for the convenience of the reader.

\begin{proposition}\label{lem:racl.tame}
If $F$ is perfect, then $F((t))^{\mathbb{Q}}$ is tame.
\end{proposition}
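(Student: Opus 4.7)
The plan is to verify the four defining properties of a tame field for $K := F((t))^{\mathbb{Q}}$ endowed with the restriction of $v_t$: henselianity, $p$-divisibility of the value group, perfectness of the residue field, and defectlessness. I will dispatch the first three properties by elementary means and devote the main effort to defectlessness, which I will derive from algebraic maximality of $K$.

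First I would use that $F((\mathbb{Q}))$ is maximal---hence henselian---by \autoref{lem:Hahn.maximal}. The henselization $K^h$ of $K$ embeds over $K$ into any henselian extension, in particular into $F((\mathbb{Q}))$; since $K^h/K$ is separable algebraic and $K$ is relatively algebraically closed in $F((\mathbb{Q}))$, the image of $K^h$ lies in $K$, forcing $K^h = K$ and establishing henselianity. The chain $F \subseteq K \subseteq F((\mathbb{Q}))$ gives $Kv_t = F$, which is perfect by hypothesis, and each $t^{1/n} \in F((\mathbb{Q}))$ is algebraic over $F((t))$ (a root of $X^n - t$) and so lies in $K$, giving $v_tK = \mathbb{Q}$, in particular $p$-divisible.

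The main step is defectlessness, which I would deduce from algebraic maximality. The key observation is that $F((\mathbb{Q}))/K$ is itself an immediate extension by the residue-field and value-group computations above, and $F((\mathbb{Q}))$ is maximal, so $F((\mathbb{Q}))$ serves as a maximal immediate extension of $K$. By the classical theory of pseudo-Cauchy sequences (Kaplansky, Ostrowski; see e.g.\ \cite{Kuhlmann??book}), every immediate algebraic extension of a valued field embeds over the field into any of its maximal immediate extensions. Hence, given an immediate algebraic $L/K$, I would embed $L$ into $F((\mathbb{Q}))$ over $K$; the image is an algebraic extension of $F((t))$ sitting inside $F((\mathbb{Q}))$, and is thus contained in the relative algebraic closure $K$; since the embedding fixes $K$, this forces $L = K$. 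So $K$ is algebraically maximal, and combined with henselianity this yields defectlessness: a finite extension of $K$ with defect would produce a nontrivial immediate algebraic subextension, contradicting algebraic maximality. The main obstacle is invoking the embedding of immediate algebraic extensions into the maximal immediate extension, which is precisely the abstract machinery packaged by the more general \cite[Lemma 3.7]{Kuhlmann??}.
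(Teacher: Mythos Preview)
Your route diverges from the paper's, and the divergence hides a real gap. The paper proves defectlessness \emph{directly}: for an arbitrary finite extension $E/K$ it uses that $F((\mathbb{Q}))/K$ is regular (since $K$ is perfect and relatively algebraically closed) to get $[E\cdot F((\mathbb{Q})):F((\mathbb{Q}))]=[E:K]$, then uses maximality (hence defectlessness) of $F((\mathbb{Q}))$ together with divisibility of $\mathbb{Q}$ to force the whole degree into the residue extension, and finally lifts that residue extension back into $E$ via \autoref{lem:section}. No detour through algebraic maximality is needed.

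Your plan instead proves algebraic maximality and then asserts that ``a finite extension of $K$ with defect would produce a nontrivial immediate algebraic subextension''. This implication is \emph{false} in general: there exist henselian, algebraically maximal valued fields that are not defectless. If $L/K$ has defect, following a tower of degree-$p$ steps you will find some immediate step $L_{i+1}/L_i$, but that is an immediate extension of $L_i$, not of $K$; algebraic maximality of $K$ says nothing about $L_i$. What \emph{is} true is the theorem (Kuhlmann, \cite[Theorem~3.2]{Kuhlmann??}) that a valued field is tame iff it is algebraically maximal with perfect residue field and $p$-divisible value group --- but that is exactly a result of the same depth as the one you are trying to prove, and your one-line justification does not establish it. So your final deduction is either wrong (as stated) or circular (if you silently invoke Kuhlmann's characterisation).

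A secondary remark: the embedding step ``every immediate algebraic extension embeds over $K$ into any maximal immediate extension'' is correct, but it is not as innocent as ``classical Kaplansky--Ostrowski'' suggests; uniqueness of maximal immediate extensions can fail when Kaplansky's hypothesis~A does not hold (and for arbitrary perfect $F$ it need not), so one has to run the pseudo-Cauchy argument with some care for the algebraic case. This is fixable, but you should be aware that it is not where the real difficulty lies --- the genuine gap is the passage from algebraic maximality to defectlessness.
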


\begin{proof}
We have that $F((t))^{\mathbb{Q}}v_t=F$ is perfect and $v_tF((t))^{\mathbb{Q}}=\mathbb{Q}$ is $p$-divisible.
Moreover, as an algebraic extension of the henselian field $F((t))$, $F((t))^{\mathbb{Q}}$ is henselian.
It remains to show that $F((t))^{\mathbb{Q}}$ is defectless. 

Let $E/F((t))^{\mathbb{Q}}$ be a finite extension of degree $n$. 
Since 
$F((\mathbb{Q}))$ is perfect, so is $F((t))^{\mathbb{Q}}$,
hence $F((\mathbb{Q}))/F((t))^{\mathbb{Q}}$ is regular.
Therefore, if we denote by $E'=F((\mathbb{Q}))\cdot E$ the compositum of $F((\mathbb{Q}))$ and $E$ in an algebraic closure of $F((\mathbb{Q}))$,
then $[E':F((\mathbb{Q}))]=n$.
Since $F((\mathbb{Q}))$ is maximal (\autoref{lem:Hahn.maximal}), 
$E'/F((\mathbb{Q}))$ is defectless. 
So since $(F((\mathbb{Q})),v_t)$ is henselian and $v_tF((\mathbb{Q}))=\mathbb{Q}$ is divisible,
we get that $[E'v_t:F]=n$.
Since $E'v_t/F$ is separable, we can assume without loss of generality that $F':=E'v_t\subseteq E'$ 
(\autoref{lem:section}).
$$
 \xymatrix{
  F((\mathbb{Q}))\ar@{-}[r]^{\quad n} & E'\ar@{=}[r] & E' \\
  F((t))^{\mathbb{Q}}\ar@{-}[r]^{\quad n}\ar@{-}[u]^{\rm reg.} & E\ar@{-}[r]\ar@{-}[u]^{\rm reg.} &  EF'\ar@{-}[u]\\
  F\ar@{-}[rr]^n\ar@{-}[u] && F'\ar@{-}[u]
 }
$$
The extension $E'/E$ is also regular, since $E/F((t))^{\mathbb{Q}}$ is algebraic. 
In particular, $E$ is relatively algebraically closed in $E'$; so since $EF'/E$ is algebraic we have that $F'\subseteq E$. Thus $Ev_t=F'$,
which shows that $E/F((t))^{\mathbb{Q}}$ is defectless.
\end{proof}

In particular, \autoref{fact:AKE.tame} implies that $F((t))^\mathbb{Q}\preceq F((\mathbb{Q}))$.
We therefore get the following picture:
$$
\xymatrix{
 F(t)\ar@{-}[r]^{\rm alg.} & F(t)^h\ar@{-}[r]^{\preceq_\exists} & F((t))\ar@{-}[r]^{\rm alg.} & F((t))^\mathbb{Q}\ar@{-}[r]^{\preceq} & F((\mathbb{Q}))
}
$$

\section{The transfer of universal-existential sentences}
\label{sec:transfer}

\noindent
Throughout this section $F/C$ will be a separable extension of fields of characteristic $p$.
We show that the truth of $\forall^{k}\exists$-sentences transfers between various valued fields. Usually the valued fields considered will have only elementarily equivalent residue fields. However, for convenience, we will sometimes discuss $\exists$-sentences with additional parameters from the residue field. 

\begin{lemma}\label{lem:go.down.from.power.series}\rm(\bf Going down from $\mathbf{F}((\Gamma))$\rm) \em
Suppose that $F$ is perfect. Let $\phi$ be an $\exists$-$\mathcal{L}_{\rm vf}(F)$-sentence, let $F\preceq\mathbf{F}$ be an elementary extension, and let $\Gamma$ be an ordered abelian group. If $(\mathbf{F}((\Gamma)),v_{t},F)\models\phi$, then $(F(t)^{h},v_{t},F)\models\phi$.
\end{lemma}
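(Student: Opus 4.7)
The plan is to descend $\phi$ from $\mathbf{F}((\Gamma))$ to $F(t)^h$ by traversing the chain of inclusions pictured at the end of Section~4, converting each step into either a trivial ``$\exists$-goes-up'' transfer or an application of AKE for tame fields. Assume without loss of generality that $\Gamma \neq 0$; the trivial-valuation case follows by first embedding $(\mathbf{F},\text{trivial})$ into $(\mathbf{F}((t)),v_t)$. Set $\Gamma' := \Gamma \otimes_\mathbb{Z} \mathbb{Q}$, the divisible hull of $\Gamma$. The natural inclusion $(\mathbf{F}((\Gamma)),v_t) \hookrightarrow (\mathbf{F}((\Gamma')),v_t)$ is an $\mathcal{L}_{\rm vf}(F)$-embedding, so $\phi$ persists in $(\mathbf{F}((\Gamma')),v_t,F)$. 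Moreover, $(\mathbf{F}((\Gamma')),v_t)$ is tame by \autoref{lem:Hahn.tame}: $\mathbf{F}$ is perfect (since $F$ is and $F \preceq \mathbf{F}$) and $\Gamma'$ is divisible, hence $p$-divisible.

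Next, pick any order-preserving embedding $\mathbb{Q} \hookrightarrow \Gamma'$; it induces a valued field embedding $(F((\mathbb{Q})),v_t) \hookrightarrow (\mathbf{F}((\Gamma')),v_t)$ over $F$. The valued field $(F((\mathbb{Q})),v_t)$ is likewise tame, its residue field satisfies $F \preceq \mathbf{F}$, and its value group $\mathbb{Q}$ embeds elementarily into $\Gamma'$ because the theory of nontrivial divisible ordered abelian groups is model-complete. Therefore \autoref{fact:AKE.tame} yields $(F((\mathbb{Q})),v_t) \preceq (\mathbf{F}((\Gamma')),v_t)$, from which we conclude that $\phi$ holds in $(F((\mathbb{Q})),v_t,F)$. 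The relation $F((t))^\mathbb{Q} \preceq F((\mathbb{Q}))$ recorded after \autoref{lem:racl.tame} then transfers $\phi$ to $(F((t))^\mathbb{Q},v_t,F)$.

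To conclude, note that the existential sentence $\phi$ requires only finitely many witnesses, which must lie in some finite subextension $E/F((t))$ with $E \subseteq F((\mathbb{Q}))$, so in particular $Ev_t = F$. By \autoref{prp:finite.extensions.of.power.series}, $(E,v_t,F)$ is $\mathcal{L}_{\rm vf}(F)$-isomorphic to $(F((s)),v_s,F)$, which is in turn $\mathcal{L}_{\rm vf}(F)$-isomorphic to $(F((t)),v_t,F)$. Hence $\phi$ holds in $(F((t)),v_t,F)$, and by \autoref{lem:F(t)h.ex.closed} we deduce $(F(t)^h,v_t,F) \models \phi$, as required. The main technical ingredient I expect to be subtle is the value-group elementarity $\mathbb{Q} \preceq \Gamma'$, which rests on the classical model-completeness of the theory of nontrivial divisible ordered abelian groups; everything else is a direct traversal of already-established material.
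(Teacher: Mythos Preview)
Your proof is correct and follows essentially the same route as the paper's own argument. The only cosmetic difference is that the paper applies \autoref{fact:AKE.tame} directly to the pair $(F((t))^{\mathbb{Q}},v_t)\subseteq(\mathbf{F}((\Delta)),v_t)$, whereas you first apply it to $(F((\mathbb{Q})),v_t)\subseteq(\mathbf{F}((\Gamma')),v_t)$ and then invoke the separately recorded $F((t))^{\mathbb{Q}}\preceq F((\mathbb{Q}))$; the ingredients and logic are otherwise identical.
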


\begin{proof}
Without loss of generality we may assume that $\Gamma$ is nontrivial. 
For notational simplicity, we will suppress the parameters $F$ from the notation. 
Let $\Delta$ be the divisible hull of $\Gamma$. 
Then $(\mathbf{F}((\Gamma)),v_{t})\subseteq(\mathbf{F}((\Delta)),v_{t})$, and existential sentences `go up', so $(\mathbf{F}((\Delta)),v_{t})\models\phi$. 

Choose an embedding of $\mathbb{Q}$ into $\Delta$;
this induces an embedding $(F((\mathbb{Q})),v_t)\subseteq(\mathbf{F}((\Delta)),v_t)$,
and therefore $(F((t))^{\mathbb{Q}},v_{t})\subseteq(\mathbf{F}((\Delta)),v_{t})$. 
Since the theory of divisible ordered abelian groups is model complete (see e.g.~\cite[Thm.~4.1.1]{PrestelDelzell}), 
$$
 v_tF((t))^{\mathbb{Q}}=\mathbb{Q}\preceq\Delta=v_t\mathbf{F}((\Delta)).
$$
Moreover,
$$
 F((t))^{\mathbb{Q}}v_t=F\preceq\mathbf{F}=\mathbf{F}((\Delta))v_t.
$$ 
Thus, since $(F((t))^{\mathbb{Q}},v_t)$ is tame by \autoref{lem:racl.tame} and $(\mathbf{F}((\Delta)),v_t)$ is tame by \autoref{lem:Hahn.tame},
\autoref{fact:AKE.tame} implies that
$$
 (F((t))^{\mathbb{Q}},v_{t})\preceq(\mathbf{F}((\Delta)),v_{t}).
$$
Therefore, $(F((t))^{\mathbb{Q}},v_{t})\models\phi$. 

Let $E$ be a finite extension of $F((t))$ that contains witnesses to the truth of $\phi$ in $(F((t))^{\mathbb{Q}},v_{t})$. 
Thus $(E,v_{t})\models\phi$. By \autoref{prp:finite.extensions.of.power.series}, there is an $\mathcal{L}_{\rm vf}(F)$-isomorphism 
$$
 f:(E,v_{t})\longrightarrow(F((t)),v_{t}).
$$
Thus $(F((t)),v_{t})\models\phi$. 
By \autoref{lem:F(t)h.ex.closed}, 
$$
 (F(t)^{h},v_{t})\preceq_{\exists}(F((t)),v_{t}),
$$
hence $(F(t)^{h},v_{t})\models\phi$, as claimed.
\end{proof}

\begin{definition}\label{def:EH(F/C)}
Let $\mathbf{H}(F/C)$ be the class of tuples $(K,v,D,i)$, where $(K,v,D)$ is an $\mathcal{L}_{\rm vf}(C)$-structure 
and $i:F\rightarrow Kv$ is a map such that
\begin{enumerate}
\item $(K,v)$ is an equicharacteristic henselian nontrivially valued field,
\item $c\longmapsto d_{c}$ is an $\mathcal{L}_{\mathrm{ring}}$-embedding $C\longrightarrow K$,
\item the valuation is trivial on $D$, and
\item $i:(F,C)\longrightarrow(Kv,Dv)$ is an $\mathcal{L}_{\mathrm{ring}}(C)$-embedding.
\end{enumerate}
\end{definition}

\begin{lemma}\label{lem:go.up.from.F(t)h}\rm(\bf Going up from $F(t)^{h}$\rm) \em
Let $\phi$ be an $\exists$-$\mathcal{L}_{\rm vf}$-sentence with parameters from $C$ and the residue sort of $(F(t)^{h},v_{t})$, and suppose that $(F(t)^{h},v_{t},C)\models\phi$. Then, for all $(K,v,D,i)\in\mathbf{H}(F/C)$, we have that $(K,v,D)\models\phi$ (where we replace the parameters from the residue sort by their images under the map $i$).
\end{lemma}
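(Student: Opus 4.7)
The plan is to construct an $\mathcal{L}_{\rm vf}(C)$-embedding $J\colon (F(t)^{h}, v_{t}, C) \hookrightarrow (K, v, D)$ whose induced map on residue fields agrees with $i$ on $F = F(t)^{h} v_{t}$. Since $\exists$-sentences go up every extension of $\mathcal{L}_{\rm vf}(C)$-structures, such a $J$ will transfer the hypothesis $(F(t)^{h}, v_{t}, C) \models \phi$ to the conclusion $(K, v, D) \models \phi$, with the residue-sort parameters replaced by their images under $i$, as required.

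First, I lift $F$ into $K$. The given data provides the partial section $Dv \to D$, $d_{c}v \mapsto d_{c}$, since $c \mapsto d_{c}$ is a ring embedding and $v$ is trivial on $D$. I extend this to a partial section $\hat{f}\colon i(F) \to K$ by a Zorn's Lemma argument combined with \autoref{lem:section}, using that every finitely generated subextension of $F/C$ is separably generated (because $F/C$ is separable). Composing with $i$ yields a ring embedding $j := \hat{f}\circ i \colon F \to K$ satisfying $j(c) = d_{c}$ for $c \in C$, $v$ trivial on $j(F)$, and $j(a)v = i(a)$ for all $a \in F$.

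Second, using the nontriviality of $v$, I pick $s \in K$ with $vs > 0$. Since $v$ is trivial on $j(F)$ and $vs > 0$, the element $s$ is transcendental over $j(F)$, and the ultrametric inequality gives
\[
v\Bigl(\sum_{i} j(a_{i})\, s^{i}\Bigr) = \min\bigl\{i \cdot vs : a_{i} \neq 0\bigr\}
\]
for any polynomial with coefficients in $F$. Hence $v|_{j(F)(s)}$ is, via $t \mapsto s$, isomorphic as an $\mathcal{L}_{\rm vf}(C)$-structure to $(F(t), v_{t}, C)$, giving an $\mathcal{L}_{\rm vf}(C)$-embedding $(F(t), v_{t}) \hookrightarrow (K, v)$ that induces $i$ on residues. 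Since $(K,v)$ is henselian, the universal property of the henselization extends this uniquely to an $\mathcal{L}_{\rm vf}(C)$-embedding $J\colon (F(t)^{h}, v_{t}) \hookrightarrow (K, v)$; because $F(t)^{h}/F(t)$ is immediate, the induced residue map remains $i$, providing the desired $J$.

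The main obstacle is the first step: the hypothesis that $F/C$ is separable is needed precisely to make \autoref{lem:section} applicable along finitely generated subextensions, and a careful Zorn's Lemma argument is required to assemble these one-element extensions into a single partial section defined on all of $i(F)$.
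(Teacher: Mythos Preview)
Your overall strategy---build an $\mathcal{L}_{\rm vf}(C)$-embedding into $(K,v,D)$ whose residue map is $i$, then use that existential sentences go up---is exactly the paper's strategy, and your second and third steps (adjoining $s$ with $v(s)>0$ and passing to the henselisation) are carried out just as in the paper. The problem is your first step: the Zorn's Lemma argument for lifting \emph{all} of $i(F)$ does not work as stated.

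The difficulty is that a maximal partial section need not be defined on the whole of $i(F)$. Concretely, take $C=\mathbb{F}_p$, $F=\mathbb{F}_p(x)$, $K=\mathbb{F}_p(x)((t))$ with $v=v_t$, and $i=\mathrm{id}$. The map $\mathbb{F}_p(x^p)\to K$ sending $x^p\mapsto x^p+t$ is a partial section extending $\mathbb{F}_p\to\mathbb{F}_p$, but it cannot be extended to $\mathbb{F}_p(x)$ since $x^p+t$ has no $p$-th root in $K$; as $[\mathbb{F}_p(x):\mathbb{F}_p(x^p)]=p$, this section is maximal yet not defined on $i(F)$. The point is that once the Zorn chain has produced some intermediate $E'$, the extension $i(F)/E'$ need no longer be separable, so \autoref{lem:section} no longer applies to extend further; your appeal to ``every finitely generated subextension of $F/C$ is separably generated'' controls separability over $Dv$, not over an arbitrary intermediate $E'$.

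The paper sidesteps this entirely by not attempting to embed $F(t)^h$ at all. Since $\phi$ has only finitely many witnesses and residue-sort parameters, and $F(t)^h$ is the directed union of the $E_0(t)^h$ for finitely generated $E_0\subseteq F$, one can choose a single finitely generated $E\subseteq F$ containing $C$ with the witnesses in $E(t)^h$ and the parameters in $E(t)^hv_t=E$. Now $E/C$ is finitely generated and (being a subextension of the separable $F/C$) separable, hence separably generated; thus \autoref{lem:section} applies \emph{once}, with no Zorn argument, to lift $i(E)$. One then proceeds exactly as you do to obtain an embedding $(E(t)^h,v_t,C)\hookrightarrow(K,v,D)$ inducing $i|_E$ on residues, which suffices to transfer $\phi$. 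If you replace your first step by this finitely-generated reduction, your proof becomes correct and coincides with the paper's.
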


\begin{proof}
Write $\phi=\exists\mathbf{x}\;\psi(\mathbf{x};\mathbf{c},\bfbeta)$ for some quantifier-free formula $\psi$ and parameters $\mathbf{c}$ from $C$ and $\bfbeta$ from $F(t)^{h}v_{t}$. Note that the variables in the tuple $\mathbf{x}$ may be from any sorts. Let $\mathbf{a}$ be such that 
$$
 (F(t)^{h},v_{t},C)\models\psi(\mathbf{a};\mathbf{c};\bfbeta).
$$ 
Since $F(t)^{h}$ is the directed union of fields $E_0(t)^{h}$ for finitely generated subfields $E_0$ of $F$, there exists a subfield $E$ of $F$ 
containing $C$ such that $E/C$ is finitely generated, $\mathbf{a}\in E(t)^{h}$, and $\bfbeta\in E(t)^{h}v_{t}$. Thus 
$$
 (E(t)^{h},v_{t},C)\models\psi(\mathbf{a};\mathbf{c};\bfbeta).
$$ 

Since $F/C$ is separable and $E/C$ is finitely generated, $E$ is separably generated over $C$. Thus $i(E)/Dv$ is separably generated. 
Note that the map $Dv\longrightarrow D$ given by $d_{c}v\longmapsto d_{c}$ is a partial section. 
By \autoref{lem:section} we may extend it to a partial section $g:i(E)\longrightarrow K$. 
Let $h:=g\circ i|_E$ be the composition. Then 
$$
 h:(E,v_{0},C)\longrightarrow(K,v,D)
$$ 
is an $\mathcal{L}_{\rm vf}(C)$-embedding, where $v_{0}$ denotes the trivial valuation on $E$:
$$
 \xymatrix{
   & Kv & K\ar@{.>}[l]_{\rm res} \\
 F\ar@{->}[r]^{i} & i(F)\ar@{-}[u] \\
 E\ar@{-}[u]\ar@{->}[r]^{i|_E} & i(E)\ar@{-}[u]\ar@{->}[r]^{g} & h(E)\ar@{-}[uu] \\
 C\ar@{-}[u]\ar@{->}[r]^{\cong} & Dv\ar@{-}[u]\ar@{->}[r]^{\cong} & D\ar@{-}[u]
 }
$$
Since $(K,v)$ is nontrivial, there exists $s\in K^\times$ with $v(s)>0$, 
which must be transcendental over $h(E)$, since $v$ is trivial on $h(E)$. 
As the rational function field $E(t)$ admits (up to equivalence) only one valuation which is
trivial on $E$ and positive on $t$,
we may extend $h$ to an $\mathcal{L}_{\rm vf}(C)$-embedding 
$$
 h':(E(t),v_{t},C)\longrightarrow(K,v,D)
$$ 
by sending $t\longmapsto s$.
Since $(K,v)$ is henselian, there is a unique extension of $h'$ to an $\mathcal{L}_{\rm vf}(C)$-embedding 
$$
 h'':(E(t)^{h},v_{t},C)\longrightarrow(K,v,D).
$$ 
So, since existential sentences `go up',
$$
 (K,v,D)\models\psi(h''(\mathbf{a});h''(\mathbf{c});h''(\bfbeta));
$$ 
and thus $(K,v,D)\models\phi$, as claimed.
\end{proof}

\begin{definition}
We let $R_{F/C}$ be the $\mathcal{L}_{\mathrm{ring}}(C)$-theory of $F$ and let $R_{F/C}^{1}$ be the subtheory consisting of existential and universal sentences. Let $\mathbf{T}_{F/C}$ (respectively, $\mathbf{T}_{F/C}^{1}$) be the $\mathcal{L}_{\rm vf}(C)$-theory consisting of the following axioms (expressed informally about a structure $(K,v,D)$):
\begin{enumerate}
\item $(K,v)$ is an equicharacteristic henselian nontrivially valued field,
\item $c\longmapsto d_{c}$ is an $\mathcal{L}_{\mathrm{ring}}$-embedding $C\longrightarrow K$,
\item the valuation $v$ is trivial on $D$, and
\item $(Kv,Dv)$ is a model of $R_{F/C}$ (respectively, $R_{F/C}^{1}$).
\end{enumerate}
\end{definition}

The `1' is intended to suggest that the sentences considered contain only one type of quantifier. Note that for any $(K,v,D)\models\mathbf{T}_{F/C}^{1}$, $d_{c}v\longmapsto d_{c}$ is a partial section of the residue map.
Let $\phi$ be an $\forall^{k}\exists$-sentence and write $\phi=\forall^{k}\mathbf{x}\;\psi(\mathbf{x})$ for some $\exists$-formula $\psi(\mathbf{x})$ with free variables $\mathbf{x}$ belonging to the residue field sort. Let ${}^{\mathbf{x}}Kv$ denote the set of $\mathbf{x}$-tuples from $Kv$. Then we observe that $(K,v,D)\models\phi$ if and only if ${}^{\mathbf{x}}Kv\subseteq\psi(K)$.
In this next proposition we show that, roughly: if $\mathbf{T}_{F/C}$ is consistent with the property `${}^{\mathbf{x}}F\subseteq\psi$' then in fact $\mathbf{T}_{F^{\mathrm{perf}}/C^{\mathrm{perf}}}$ entails `${}^{\mathbf{x}}F\subseteq\psi$'.

\begin{proposition}\label{prp:AkE.transfer}\rm(\bf Main Proposition\rm) \em
Let $\psi(\mathbf{x})$ be an $\exists$-$\mathcal{L}_{\rm vf}(C)$-formula with free variables $\mathbf{x}$ belonging to the residue field sort. 
Suppose there exists $(K,v,D)\models\mathbf{T}_{F/C}\cup\{\forall^{k}\mathbf{x}\;\psi(\mathbf{x})\}$.
Then, for all $(L,w,E,i)\in\mathbf{H}(F^{\mathrm{perf}}/C^{\mathrm{perf}})$, we have ${}^{\mathbf{x}}i(F)\subseteq\psi(L)$.
\end{proposition}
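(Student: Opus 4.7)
The plan is to fix $(K,v,D) \models \mathbf{T}_{F/C} \cup \{\forall^{k}\mathbf{x}\,\psi(\mathbf{x})\}$ provided by the hypothesis and reduce the conclusion to showing $(F^{\rm perf}(t)^{h}, v_{t}, C^{\rm perf}) \models \psi(\mathbf{a})$ for each $\mathbf{a} \in F^{n}$. Once this holds, \autoref{lem:go.up.from.F(t)h} applied with $F^{\rm perf}/C^{\rm perf}$ in place of $F/C$ gives $(L,w,E)\models\psi(i(\mathbf{a}))$ for every $(L,w,E,i)\in\mathbf{H}(F^{\rm perf}/C^{\rm perf})$, as required.

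First, I would replace $(K,v,D)$ by a sufficiently saturated elementary extension $(K^{*},v^{*},D^{*})$. The axiom $(Kv,Dv)\models R_{F/C}$ passes to the elementary extension, and saturation of the residue-field reduct $(K^{*}v^{*},D^{*}v^{*})$ yields an $\mathcal{L}_{\mathrm{ring}}(C)$-elementary embedding $j:F\hookrightarrow K^{*}v^{*}$ (so $j(c)=d_{c}v^{*}$). Since $\forall^{k}\mathbf{x}\,\psi(\mathbf{x})$ holds in $(K^{*},v^{*},D^{*})$, we have $(K^{*},v^{*},D^{*})\models\psi(j(\mathbf{a}))$. Then I would pass to the tamification $(K^{*t},v^{*t})$ from \autoref{fact:tamification}: a perfect tame extension of $(K^{*},v^{*})$ with residue field $(K^{*}v^{*})^{\rm perf}$ and value group $\tfrac{1}{p^{\infty}}v^{*}K^{*}$, still satisfying $\psi(j(\mathbf{a}))$ because $\psi$ is existential.

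The heart of the proof is to transfer $\psi(j(\mathbf{a}))$ to a Hahn series field over $F^{\rm perf}$. I would set $\Gamma := v^{*t}K^{*t}$, which is $p$-divisible, and let $(L_{0},w_{0})$ be a $|K^{*t}|^{+}$-saturated tame elementary extension of the Hahn field $(F^{\rm perf}((\Gamma)),v_{t})$ (which is tame by \autoref{lem:Hahn.tame}); write $\mathbf{F}:=L_{0}w_{0}$, so $F^{\rm perf}\preceq\mathbf{F}$. Since $F\preceq K^{*}v^{*}$ implies $F^{\rm perf}\preceq_{\exists}(K^{*}v^{*})^{\rm perf}$ by \autoref{lem:ex.closed.perf}, saturation of $\mathbf{F}$ provides an $\mathcal{L}_{\mathrm{ring}}$-embedding $\sigma:(K^{*}v^{*})^{\rm perf}\hookrightarrow\mathbf{F}$ with $\sigma\circ j^{{\rm perf}}=\mathrm{id}_{F^{\rm perf}}$, where $j^{{\rm perf}}:F^{\rm perf}\hookrightarrow(K^{*}v^{*})^{\rm perf}$ is the natural extension of $j$. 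Using that $K^{*t}$ is perfect, and applying \autoref{lem:section} iteratively (to finitely generated, hence separably generated, pieces of $F^{\rm perf}/C^{\rm perf}$, together with purely-inseparable one-element steps handled by taking $p$-th roots in $K^{*t}$), lift $j^{{\rm perf}}(F^{\rm perf})$ to a partial section $s:j^{{\rm perf}}(F^{\rm perf})\to K^{*t}$ extending the trivial section on the constants. Then $(F^{\rm perf},\text{trivially valued})$ embeds into $(K^{*t},v^{*t})$ via $s\circ j^{{\rm perf}}$ and naturally into $(L_{0},w_{0})$ through $\mathbf{F}$.

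Now I would invoke the Relative Embedding Property (\autoref{fact:relative.embedding.tame}) with common subfield $(F^{\rm perf},\text{trivial})$, $(K,v)=(K^{*t},v^{*t})$, and $(L,w)=(L_{0},w_{0})$. All hypotheses are satisfied: trivially-valued $F^{\rm perf}$ is defectless; $(L_{0},w_{0})$ is $|K^{*t}|^{+}$-saturated; value groups are torsion free and $K^{*t}v^{*t}/F^{\rm perf}$ is separable since $F^{\rm perf}$ is perfect; the value-group embedding is the inclusion $\Gamma=v^{*t}K^{*t}\hookrightarrow w_{0}L_{0}$ and the residue-field embedding is $\sigma$. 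The resulting embedding $\iota:(K^{*t},v^{*t})\hookrightarrow(L_{0},w_{0})$ is over $F^{\rm perf}$, hence sends each constant $d_{c}=(s\circ j)(c)$ to $c\in\mathbf{F}\subseteq L_{0}$, and on residues sends $j(a_{k})\in j^{{\rm perf}}(F^{\rm perf})$ to $\sigma(j^{{\rm perf}}(a_{k}))=a_{k}$. Preservation of existential sentences by $\iota$ yields $(L_{0},w_{0},C)\models\psi(\mathbf{a})$, elementarity drops this to $(F^{\rm perf}((\Gamma)),v_{t},C)\models\psi(\mathbf{a})$, and \autoref{lem:go.down.from.power.series} gives $(F^{\rm perf}(t)^{h},v_{t},C^{\rm perf})\models\psi(\mathbf{a})$, completing the reduction. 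The main obstacle is this Relative Embedding step: arranging the common subfield $F^{\rm perf}$ so that after embedding the constants $d_{c}$ and residue parameters $j(\mathbf{a})$ are interpreted as $c$ and $\mathbf{a}$ respectively, which in turn relies on the partial section construction via \autoref{lem:section} and the perfectness of $K^{*t}$.
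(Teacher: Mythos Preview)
Your proposal is correct and follows essentially the same strategy as the paper: pass to an elementary extension to realize $F$ in the residue field, tamify, use the Relative Embedding Property (\autoref{fact:relative.embedding.tame}) over the trivially-valued $F^{\rm perf}$ to embed into (a saturated extension of) a Hahn field with residue field $\mathbf{F}\succeq F^{\rm perf}$, then descend via \autoref{lem:go.down.from.power.series} to $F^{\rm perf}(t)^h$ and go back up via \autoref{lem:go.up.from.F(t)h}. The only cosmetic differences are that the paper lifts $F$ into $K$ \emph{before} tamifying (so $F^{\rm perf}\subseteq K^{t}$ comes for free and the purely-inseparable step is unnecessary), and constructs $\mathbf{F}$ first and then forms $\mathbf{F}((v^{t}K^{t}))$ rather than reading $\mathbf{F}$ off as the residue field of a saturated extension of $F^{\rm perf}((\Gamma))$.
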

\begin{proof}
Since $(K,v,D)$ models $\mathbf{T}_{F/C}$, we have $(Kv,Dv)\equiv(F,C)$. By passing, if necessary, to an elementary extension of $(K,v,D)$, there is an elementary embedding
$$
 f:(F,C)\overset{\preceq}{\longrightarrow}(Kv,Dv).
$$
As noted after the definition of $\mathbf{T}_{F/C}$, the map $g_0:Dv\longrightarrow D$ given by $d_{c}v\longrightarrow d_{c}$ is a partial section. Since $F/C$ is separable, $f(F)/Dv$ is also separable. 
Thus any finitely generated subextension of $f(F)/Dv$ is separably generated. 
By \autoref{lem:section} we may pass again - if necessary - to an elementary extension and extend $g_0$ to a partial section $g:f(F)\longrightarrow K$. Note that $g$ is also an $\mathcal{L}_{\mathrm{ring}}(C)$-embedding $(f(F),Dv)\longrightarrow(K,D)$.

Let $h:=g\circ f$. Then $h:(F,C)\longrightarrow(K,D)$ is an $\mathcal{L}_{\mathrm{ring}}(C)$-embedding. Because $g$ is a section, the valuation $v$ is trivial when restricted to the image of $h$. Thus, if $v_{0}$ denotes the trivial valuation on $F$, the map $h$ is an $\mathcal{L}_{\rm vf}(C)$-embedding $(F,v_{0},C)\longrightarrow(K,v,D)$. The induced embedding of residue fields $\bar{h}:Fv_{0}\longrightarrow Kv$ is the composition of the elementary embedding $f$ with an isomorphism. Thus $\bar{h}:Fv_{0}\longrightarrow Kv$ is an elementary embedding.
From now on we identify $(F,v_{0},C)$ with its image under $h$ as a substructure of $(K,v,D)$, noting that the residue field extension is an elementary extension.
$$
 \xymatrix{
  & Kv & K\ar@{.>}[l]_{\rm res} \\
  F\ar@{->}[r]^f & f(F)\ar@{-}[u]\ar@{->}[r]^{g} & h(F)\ar@{-}[u] \\
  C\ar@{-}[u]\ar@{->}[r]^{\cong} & Dv\ar@{-}[u]^{\rm sep}\ar@{->}[r]^{g_0} & D\ar@{-}[u] 
 }
$$

Choose an extension $(K^{t},v^t)/(K,v)$ as in \autoref{fact:tamification}. 
Since $K^{t}$ is perfect, we can embed $D^{\mathrm{perf}}$ into $K^{t}$ over $D$ so that $(K^{t},v^t,D^{\mathrm{perf}})$ is an $\mathcal{L}_{\rm vf}(C^{\mathrm{perf}})$-structure. Furthermore $(F^{\mathrm{perf}},v_{0},C^{\mathrm{perf}})$ is naturally (identified with) a substructure of $(K^{t},v^t,D^{\mathrm{perf}})$. 
Since $Fv_{0}\preceq Kv$, \autoref{lem:ex.closed.perf} gives that
$$
 F^{\mathrm{perf}}v_{0}=Fv_{0}^{\rm perf}\preceq_\exists Kv^{\rm perf}=K^{t}v^t.
$$
Thus there is an elementary extension $F^{\mathrm{perf}}v_{0}\preceq\mathbf{F}$ and an embedding $\sigma:K^{t}v^t\longrightarrow\mathbf{F}$ over $F^{\mathrm{perf}}v_{0}$,
see the diagram below.

Now we consider the two valued fields $(K^{t},v^t)$ and $(\mathbf{F}((v^tK^{t})),v_{t})$ with common subfield $(F^{\mathrm{perf}},v_{0})$. 
Note that $K^t$ is tame by definition, and $\mathbf{F}((v^tK^{t}))$ is tame by \autoref{lem:Hahn.tame}.
As a trivially valued field, $(F^{\mathrm{perf}},v_{0})$ is defectless. 
The extension of value groups $v^tK^{t}/v_{0}F^{\mathrm{perf}}$ is isomorphic to $v^tK^{t}$, thus it is torsion-free. 
The extension $K^{t}v^t/F^{\mathrm{perf}}v_{0}$ is separable since $F^{\mathrm{perf}}v_{0}$ is isomorphic to $F^{\mathrm{perf}}$ which is perfect.
Let $(\mathbf{F}((v^tK^{t})),v_{t})^{*}$ be a $|K|^{+}$-saturated elementary extension of $(\mathbf{F}((v^tK^{t})),v_{t})$. We have satisfied the hypotheses of \autoref{fact:relative.embedding.tame}, thus there exists an embedding 
$$
 \iota:(K^{t},v^t)\longrightarrow(\mathbf{F}((v^tK^{t})),v_{t})^{*}
$$ 
over $(F^{\mathrm{perf}},v_{0})$. 
Since existential sentences `go up', we get that 
$(\mathbf{F}((v^tK^{t})),v_{t})^{*}$, and therefore also $(\mathbf{F}((v^tK^{t})),v_{t})$, 
models the existential $\mathcal{L}_{\rm vf}(F^{\mathrm{perf}})$-theory of $(K^{t},v^t)$.
$$
 \xymatrix{
  & & & \mathbf{F}((v^tK^t))^\ast \\
  & & & \mathbf{F}((v^tK^t))\ar@{-}[u]_{\preceq} \\
  & K^t\ar@{->}[rruu]^{\iota}\ar@{.>}[r]_{\rm res} & K^tv^t\ar@{->}[r]^{\sigma} & \mathbf{F}\ar@{-}[u] \\
  K\ar@{-}[r] & K^{\rm perf}\ar@{-}[u] & & \\
  F\ar@{-}[u]\ar@{-}[r] & F^{\rm perf}\ar@{-}[u]\ar@{->}[r]^{\cong} & F^{\rm perf}v_0\ar@{-}[uu]^{\preceq_\exists}\ar@{-}[ruu]_{\preceq} & \\
  C\ar@{-}[u]\ar@{-}[r] & C^{\rm perf}\ar@{-}[u] & & 
 }
$$
Our assumption was that $\psi(\mathbf{x})$ is an $\exists$-$\mathcal{L}_{\rm vf}(C)$-formula with free variables $\mathbf{x}$ belonging to the residue field sort, and that $(K,v,D)\models\forall^{k}\mathbf{x}\;\psi(\mathbf{x})$, i.e. ${}^{\mathbf{x}}Kv\subseteq\psi(K)$. Then ${}^{\mathbf{x}}Fv\subseteq{}^{\mathbf{x}}Kv\subseteq\psi(K)$ (note that we write $Fv$ rather than $F$ because we have identified $F$ with a subfield of $K$). 
Let 
$$
 \Psi_{F}:=\{\psi(\mathbf{a})\;|\;\mathbf{a}\in{}^{\mathbf{x}}Fv\}.
$$
Then $\Psi_{F}$ is a set of $\exists$-$\mathcal{L}_{\rm vf}(C)$-sentences (with additional parameters from $Fv$) which is equivalent to the property that `${}^{\mathbf{x}}Fv\subseteq\psi$'. We may now restate our assumption as $(K,v)\models\Psi_{F}$. Since existential sentences `go up', $(K^{t},v^t)\models\Psi_{F}$. By the result of the previous paragraph, we have $(\mathbf{F}((v^tK^{t})),v_{t})\models\Psi_{F}$. By an application of \autoref{lem:go.down.from.power.series}, $(F^{\mathrm{perf}}(t)^{h},v_{t})\models\Psi_{F}$. By \autoref{lem:go.up.from.F(t)h}, $(L,w)\models\Psi_{F}$ (where we replace the parameters from $Fv$ by their images under the map $i$). 
This shows that ${}^{\mathbf{x}}i(F)\subseteq\psi(L)$, as claimed.
\end{proof}

\begin{corollary}\label{cor:near.AkE.completeness}\rm(\bf Near $\forall^{k}\exists$-$C$-completeness\rm) \em
Let $\psi(\mathbf{x})$ be an $\exists$-$\mathcal{L}_{\rm vf}(C)$-formula with free variables $\mathbf{x}$ belonging to the residue field sort. 
Suppose there exists $(K,v,D)\models\mathbf{T}_{F/C}\cup\{\forall^{k}\mathbf{x}\;\psi(\mathbf{x})\}$. 
Then there exists $n\in\mathbb{N}$ such that, for all $(L,w,E)\models\mathbf{T}_{F/C}$, we have ${}^{\mathbf{x}}Lw\subseteq\psi(L^{p^{-n}})$.
\end{corollary}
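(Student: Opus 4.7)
My plan is to first establish the pointwise variant ${}^{\mathbf{x}}Lw\subseteq\psi(L^{\mathrm{perf}})$ for each individual $(L,w,E)\models\mathbf{T}_{F/C}$, and then to extract the uniform bound $n$ via an ultraproduct argument.

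For the pointwise step, fix $(L,w,E)\models\mathbf{T}_{F/C}$. The idea is to apply \autoref{prp:AkE.transfer} with the pair $(Lw,Ev)$ in the role of $(F,C)$. Two checks are needed: that $Lw/Ev$ is separable, and that the given $(K,v,D)$ models $\mathbf{T}_{Lw/Ev}\cup\{\forall^{k}\mathbf{x}\,\psi(\mathbf{x})\}$ under the obvious relabelling of constants. Separability transfers because it is witnessed tuple-by-tuple by first-order $\mathcal{L}_{\mathrm{ring}}(C)$-sentences (asserting that each $p$-independent tuple of $C$ remains $p$-independent in the larger field), and $(Lw,Ev)\equiv(F,C)$ as $\mathcal{L}_{\mathrm{ring}}(C)$-structures. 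The second condition reduces to the transitivity of $(Kv,Dv)\equiv(F,C)\equiv(Lw,Ev)$. The Main Proposition, applied to $(L^{\mathrm{perf}},w,E^{\mathrm{perf}},\mathrm{id})\in\mathbf{H}((Lw)^{\mathrm{perf}}/(Ev)^{\mathrm{perf}})$, then gives ${}^{\mathbf{x}}Lw\subseteq\psi(L^{\mathrm{perf}})$. Since $\psi$ is existential with only finitely many witnesses, for each $\mathbf{a}\in{}^{\mathbf{x}}Lw$ we get $\mathbf{a}\in\psi(L^{p^{-n(L,\mathbf{a})}})$ for some finite $n(L,\mathbf{a})$.

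For the uniform step I argue by contradiction. The key observation is that ``$\mathbf{a}\in\psi(K^{p^{-m}})$'' is itself expressible as an $\exists$-$\mathcal{L}_{\mathrm{vf}}(C)$-formula $\psi^{[m]}(\mathbf{x})$ in $(K,v,D)$: writing each putative witness $y$ from $K^{p^{-m}}$ as a formal $p^{m}$-th root of an element $z$ of $K$, every atomic subformula of $\psi$ rewrites in terms of $z$ using the identity $(\sum a_{i}y_{i})^{p^{m}}=\sum a_{i}^{p^{m}}y_{i}^{p^{m}}$ in characteristic $p$ (and trivially in characteristic $0$, where the statement collapses), together with the analogous translations $y_{\Gamma}=z_{\Gamma}/p^{m}$ in the value group and the Frobenius-commuting translation in the residue sort; the parameters that appear in $\psi^{[m]}$ are of the form $c^{p^{m}}$ and stay in $C$. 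If the uniform $n$ did not exist, I pick for each $n\in\mathbb{N}$ a structure $(L_{n},w_{n},E_{n})\models\mathbf{T}_{F/C}$ and $\mathbf{a}_{n}\in{}^{\mathbf{x}}L_{n}w_{n}$ with $\mathbf{a}_{n}\notin\psi(L_{n}^{p^{-n}})$, and take the ultraproduct $(L^{*},w^{*},E^{*},\mathbf{a}^{*})$ along a non-principal ultrafilter on $\mathbb{N}$. Since $\mathbf{T}_{F/C}$ is first-order, $(L^{*},w^{*},E^{*})\models\mathbf{T}_{F/C}$; and for each fixed $m$, the implication $\psi^{[m]}\Rightarrow\psi^{[n]}$ for $n\geq m$ combined with \L o\'s forces $\mathbf{a}^{*}\notin\psi((L^{*})^{p^{-m}})$, whence $\mathbf{a}^{*}\notin\psi((L^{*})^{\mathrm{perf}})$, contradicting the pointwise step applied to $(L^{*},w^{*},E^{*})$.

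The main obstacle will not be the single invocation of the Main Proposition (which is essentially bookkeeping, together with the small separability transfer), but rather the compactness step. It hinges on the Frobenius-twist observation that ``$\mathbf{a}\in\psi(K^{p^{-m}})$'' really is a first-order property of the original $\mathcal{L}_{\mathrm{vf}}(C)$-structure $(K,v,D,\mathbf{a})$; once this is available, the ultraproduct delivers the required uniform $n$.
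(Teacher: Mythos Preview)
Your proof is correct and follows essentially the same route as the paper. The pointwise step is identical: transfer separability of $F/C$ to $Lw/Ew$ via $(Lw,Ew)\equiv(F,C)$, then apply the Main Proposition with $(Lw,Ew)$ in the role of $(F,C)$ and target $(L^{\mathrm{perf}},w,E^{\mathrm{perf}},\mathrm{id})$. For the uniform step, your Frobenius-twisted formula $\psi^{[m]}$ is exactly the paper's $\psi_{m}(\mathbf{x})=\exists\mathbf{y}\,\rho(\mathbf{x}^{p^{m}},\mathbf{y},\mathbf{c}^{p^{m}})$, obtained more cleanly by noting that the $p^{m}$-th power Frobenius is an $\mathcal{L}_{\rm vf}$-isomorphism $(L^{p^{-m}},w)\to(L,w)$; the paper then finishes with a type/compactness argument in place of your ultraproduct, which is a routine and equivalent variation.
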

\begin{proof}
Let $(L,w,E)\models\mathbf{T}_{F/C}$. 
As $F/C$ is separable and $(Lw,Ew)\equiv(F,C)$ as $\mathcal{L}_{\rm ring}(C)$-structures,
also $Lw/Ew$ is separable.
In particular,
$(K,v,D),(L,w,E)\models\mathbf{T}_{Lw/Ew}$ and we may apply the conclusion of \autoref{prp:AkE.transfer} to 
$$
 (L^{\mathrm{perf}},w,E^{\mathrm{perf}},{\rm id})\in\mathbf{H}(Lw^{\mathrm{perf}}/Ew^{\mathrm{perf}}).
$$
Thus we have that ${}^{\mathbf{x}}Lw\subseteq\psi(L^{\mathrm{perf}})$. To find $n$, we use a simple compactness argument, as follows.

Write the formula $\psi(\mathbf{x})$ as $\exists\mathbf{y}\;\rho(\mathbf{x},\mathbf{y},\mathbf{c})$, for a quantifier-free $\mathcal{L}_{\rm vf}$-formula $\rho$. For each $n\in\mathbb{N}$, let $\psi_{n}(\mathbf{x})$ be the formula $\exists\mathbf{y}\;\rho(\mathbf{x}^{p^{n}},\mathbf{y},\mathbf{c}^{p^{n}})$ and consider the $\mathcal{L}_{vf}(C)$-structure $(L^{p^{-n}},w,E)$ which extends $(L,w,E)$. Then, for $\mathbf{a}\in{}^{\mathbf{x}}Lw$, $\mathbf{a}\in\psi(L^{p^{-n}})$ if and only if $\mathbf{a}\in\psi_{n}(L)$. Let $p(\mathbf{x})$ be the set of formulas $\{\neg\psi_{n}(\mathbf{x})\;|\;n\in\mathbb{N}\}$. If $p(\mathbf{x})$ is a type, i.e. $p(\mathbf{x})$ is consistent with $\mathbf{T}_{F/C}$, then we may realise it by a tuple $\mathbf{a}$ in a model $(L,w,E)\models\mathbf{T}_{F/C}$. Thus $\mathbf{a}\notin\psi(L^{p^{-n}})$, for all $n\in\mathbb{N}$. Since $L^{\mathrm{perf}}$ is the directed union $\bigcup_{n\in\mathbb{N}}L^{p^{-n}}$ (even as $\mathcal{L}_{\rm vf}(C)$-structures), we have that $\mathbf{a}\notin\psi(L^{\mathrm{perf}})$. This contradicts the result of the previous paragraph. 

Consequently, there exists $n\in\mathbb{N}$ such that $\mathbf{T}_{F/C}$ entails $\forall^{k}\mathbf{x}\;\psi_{n}(\mathbf{x})$. Equivalently, for all $(L,w,E)\models\mathbf{T}_{F/C}$, we have ${}^{\mathbf{x}}Lw\subseteq\psi(L^{p^{-n}})$, as required.
\end{proof}

\begin{corollary}\label{cor:residue.perfect}\rm(\bf Perfect residue field, $\forall^{k}\exists$-$C$-completeness\rm) \em
Suppose that $F$ is perfect. Then $\mathbf{T}_{F/C}$ is $\forall^{k}\exists$-$C$-complete, i.e. for any $\forall^{k}\exists$-$\mathcal{L}_{\rm vf}(C)$-sentence $\phi$, either $\mathbf{T}_{F/C}\models\phi$ or $\mathbf{T}_{F/C}\models\neg\phi$.
\end{corollary}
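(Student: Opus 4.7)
The refinement I aim for is this: when $F$ is perfect, the conclusion of \autoref{prp:AkE.transfer} in fact holds for all $(L,w,E,i)\in\mathbf{H}(F/C)$, and not only for those in the more restrictive class $\mathbf{H}(F/C^{\mathrm{perf}})$ appearing in the original statement. To see this, inspect the proof of \autoref{prp:AkE.transfer}: its penultimate step establishes $(F^{\mathrm{perf}}(t)^{h},v_{t},C^{\mathrm{perf}})\models\Psi_{F}$, which when $F=F^{\mathrm{perf}}$ reads $(F(t)^{h},v_{t},C^{\mathrm{perf}})\models\Psi_{F}$. Now $\Psi_{F}$ is a set of $\exists$-$\mathcal{L}_{\mathrm{vf}}(C)$-sentences whose only non-$C$ parameters are residue-sort elements of $F$, so no constant from $C^{\mathrm{perf}}\setminus C$ actually occurs in any of its sentences. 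Passing to the $\mathcal{L}_{\mathrm{vf}}(C)$-reduct therefore gives $(F(t)^{h},v_{t},C)\models\Psi_{F}$, and we may invoke \autoref{lem:go.up.from.F(t)h} directly with the pair $(F,C)$, whose hypothesis requires exactly $(L,w,E,i)\in\mathbf{H}(F/C)$.

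With this refinement in hand I prove the corollary in the spirit of \autoref{cor:near.AkE.completeness}. Let $\phi=\forall^{k}\mathbf{x}\,\psi(\mathbf{x})$ be an arbitrary $\forall^{k}\exists$-$\mathcal{L}_{\mathrm{vf}}(C)$-sentence. If $\mathbf{T}_{F/C}\cup\{\phi\}$ is inconsistent then $\mathbf{T}_{F/C}\models\neg\phi$ and we are done. Otherwise, fix $(K,v,D)\models\mathbf{T}_{F/C}\cup\{\phi\}$ and let $(L,w,E)\models\mathbf{T}_{F/C}$ be arbitrary. As at the start of the proof of \autoref{cor:near.AkE.completeness}, separability of $F/C$ together with $(Kv,Dv)\equiv(F,C)\equiv(Lw,Ew)$ yields that $Lw/Ew$ is separable and that both $(K,v,D)$ and $(L,w,E)$ are models of $\mathbf{T}_{Lw/Ew}$, under the canonical identification $Dv\cong Ew$ given by $c\mapsto d_{c}v$. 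Since $F$ is perfect and $Lw\equiv F$, the residue field $Lw$ is perfect, so the refinement above applies with $Lw$ and $Ew$ in the roles of $F$ and $C$. Taking $(L,w,E,\mathrm{id})\in\mathbf{H}(Lw/Ew)$ we conclude ${}^{\mathbf{x}}Lw\subseteq\psi(L)$, i.e.\ $(L,w,E)\models\phi$, so $\mathbf{T}_{F/C}\models\phi$.

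The main obstacle is justifying the refinement of \autoref{prp:AkE.transfer}. This is not a new argument but a careful bookkeeping observation about where perfection is genuinely needed in that proof (the tamification step produces the perfect field $K^{t}$, forcing the intermediate use of $F^{\mathrm{perf}}$ and $C^{\mathrm{perf}}$ in the Relative Embedding step) and where it is not (the final appeal to \autoref{lem:go.up.from.F(t)h}, which only cares about whether the target lies in $\mathbf{H}(F/C)$ and does not use the perfect hulls at all). Once this observation is in place, the deduction of \autoref{cor:residue.perfect} mimics the compactness-free core of \autoref{cor:near.AkE.completeness}, with $L^{\mathrm{perf}}$ replaced by $L$ itself.
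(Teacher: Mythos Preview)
Your argument is correct and follows essentially the same route as the paper: both proofs apply \autoref{prp:AkE.transfer} with $Lw/Ew$ in place of $F/C$, using that $Lw\equiv F$ is perfect, and take $(L,w,E,\mathrm{id})\in\mathbf{H}(Lw/Ew)$. The only difference is that your first paragraph's ``refinement'' is an unnecessary detour: under the standing hypothesis that $F/C$ is separable, perfection of $F$ already forces $C$ to be perfect (if $c\in C$ then $c^{1/p}\in F\cap C^{1/p}$, and separability gives $F\cap C^{1/p}=C$), so $C^{\mathrm{perf}}=C$ and likewise $Ew^{\mathrm{perf}}=Ew$; the paper simply records $\mathbf{H}(Lw/Ew)=\mathbf{H}(Lw^{\mathrm{perf}}/Ew^{\mathrm{perf}})$ and invokes \autoref{prp:AkE.transfer} as stated.
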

\begin{proof}
Suppose that there is $(K,v,D)\models\mathbf{T}_{F/C}\cup\{\phi\}$ and let $(L,w,E)\models\mathbf{T}_{F/C}$. Then $(K,v,D)\models\mathbf{T}_{Lw/Ew}$ and 
$$
 (L,w,E,{\rm id})\in\mathbf{H}(Lw/Ew)=\mathbf{H}(Lw^{\mathrm{perf}}/Ew^{\mathrm{perf}}).
$$ 
We write $\phi=\forall^{k}\mathbf{x}\;\psi(\mathbf{x})$ for some $\exists$-$\mathcal{L}_{\rm vf}(C)$-formula $\psi(\mathbf{x})$ with free variables $\mathbf{x}$ belonging to the residue field sort. Then $(K,v,D)\models\phi$ means that ${}^{\mathbf{x}}Kv\subseteq\psi(K)$. Applying \autoref{prp:AkE.transfer}, we have that ${}^{\mathbf{x}}Lw\subseteq\psi(L)$. Thus $(L,w,E)\models\phi$. This shows that $\mathbf{T}_{F/C}\models\phi$, as required.
\end{proof}

\begin{remark}
We do not know whether the assumption that $F$ is perfect is necessary in \autoref{cor:residue.perfect}.

However, note that \autoref{cor:residue.perfect} cannot be extended from $\forall^k\exists$-sentences to arbitrary
$\forall\exists$-sentences (even without parameters and with only one universal quantifier):
For example, the sentence
$$
 \forall x \exists y \; ( v(x)=v(y^2) )
$$
expresses $2$-divisibility of the value group, hence is satisfied in $F((\mathbb{Q}))$ but not in $F((t))$.

On the other hand, one could generalize \autoref{cor:residue.perfect} by slightly adapting the proof to allow also sentences with more general quantifiers over the residue field, namely $Q^k\exists$-$\mathcal{L}_{\rm vf}(C)$-sentences, i.e.\ sentences of the form
$$
 \exists^k \mathbf{x}_1\forall^k \mathbf{y}_1\dots\exists^k\mathbf{x}_n\forall^k\mathbf{y}_n\;\psi(\mathbf{x}_1,\mathbf{y}_1,\dots,\mathbf{x}_n,\mathbf{y}_n)
$$
with $\psi(\mathbf{x}_1,\mathbf{y}_1,\dots,\mathbf{x}_n,\mathbf{y}_n)$ an $\exists$-$\mathcal{L}_{\rm vf}(C)$-formula.
\end{remark}

\section{The existential theory}

\noindent
We now restrict the machinery of the previous section to existential sentences
and prove \autoref{thm:intro.E.complete} from the introduction.
We fix a field $F$, let $C$ be the prime field of $F$ and denote
$\mathbf{T}_F=\mathbf{T}_{F/C}$, $\mathbf{H}(F)=\mathbf{H}(F/C)$.

\begin{lemma}\label{lem:E.complete}
$\mathbf{T}_{F}$ is $\exists$-complete, i.e. for any $\exists$-$\mathcal{L}_{\rm vf}$-sentence $\phi$, either $\mathbf{T}_{F}\models\phi$ or $\mathbf{T}_{F}\models\neg\phi$.
\end{lemma}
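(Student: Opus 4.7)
The plan is to deduce this from \autoref{cor:near.AkE.completeness} by a Frobenius transfer. Fix an $\exists$-$\mathcal{L}_{\rm vf}$-sentence $\phi$ and suppose some $(K,v,D)\models\mathbf{T}_F\cup\{\phi\}$; I will show that every $(L,w,E)\models\mathbf{T}_F$ already satisfies $\phi$.

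View $\phi$ as the trivial $\forall^k\exists$-sentence $\forall^k\mathbf{x}\,\psi(\mathbf{x})$ with empty universal block and $\psi=\phi$. Applying \autoref{cor:near.AkE.completeness} yields some $n\in\mathbb{N}$ such that every $(L,w,E)\models\mathbf{T}_F$ satisfies $(L^{p^{-n}},w,E)\models\phi$, with $w$ extended uniquely to the purely inseparable closure. In residue characteristic zero, $L^{p^{-n}}=L$, and we are done immediately.

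In equicharacteristic $p>0$, the $p^n$-th power Frobenius $x\mapsto x^{p^n}$ gives an $\mathcal{L}_{\rm vf}$-isomorphism $(L^{p^{-n}},w)\longrightarrow(L,w)$: a field isomorphism on the field sort; an ordered-group isomorphism $\frac{1}{p^n}wL\longrightarrow wL$, $\gamma\mapsto p^n\gamma$, on the value-group sort (since $w(a^{p^n})=p^nw(a)$); and the $p^n$-th power Frobenius $(Lw)^{p^{-n}}\longrightarrow Lw$ on the residue-field sort. Since $\phi$ is parameter-free, it transfers across this isomorphism, and $(L,w,E)\models\phi$ follows. The only point requiring care is the sort-by-sort verification that Frobenius is a global $\mathcal{L}_{\rm vf}$-isomorphism; equivalently, in the spirit of the compactness argument used inside the proof of \autoref{cor:near.AkE.completeness}, one can extract a witness $\mathbf{a}\in L^{p^{-n}}$ to $\phi$ in $L^{p^{-n}}$ and observe that $\mathbf{a}^{p^n}\in L$ (with multiplication by $p^n$ on value-group coordinates) witnesses $\phi$ in $L$, because the quantifier-free matrix of $\phi$ is preserved under $p^n$-th powering as its coefficients lie in the prime field $\mathbb{F}_p$.
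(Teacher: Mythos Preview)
Your proof is correct and follows essentially the same route as the paper: reduce to \autoref{cor:near.AkE.completeness} by viewing $\phi$ as a degenerate $\forall^k\exists$-sentence, then push the conclusion from $L^{p^{-n}}$ down to $L$ via Frobenius, using that $\phi$ is parameter-free. The paper uses a single dummy residue-sort variable rather than an empty universal block, and phrases the Frobenius step more tersely (``apply the $n$-th power of the Frobenius map''), but the content is the same; your explicit sort-by-sort verification that Frobenius is an $\mathcal{L}_{\rm vf}$-isomorphism is a welcome clarification.
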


\begin{proof}
Suppose that $\mathbf{T}_{F}\cup\{\phi\}$ is consistent. Thus there exists $(K,v)\models\mathbf{T}_{F}\cup\{\phi\}$. 
Simply viewing $\phi$ as an $\forall^{k}\exists$-formula $\forall^kx\,\psi(x)$
with $\psi(x)=\phi$, we have that $Kv\subseteq\psi(K)$. By \autoref{cor:near.AkE.completeness} there exists $n\in\mathbb{N}$ such that, for every $(L,w)\models\mathbf{T}_{F}$, $Lw\subseteq\psi(L^{p^{-n}})$. In particular, $\psi(L^{p^{-n}})$ is nonempty. Since no parameters appear in $\psi$, we may apply the $n$-th power of the Frobenius map to get that $\psi(L)$ is nonempty, for every $(L,w)\models\mathbf{T}_{F}$. 
Viewing $\phi$ as an $\exists$-sentence again, we have that $(L,w)\models\phi$. 
Thus $\mathbf{T}_{F}\models\phi$, as required.
\end{proof}

For the proof of \autoref{thm:intro.E.complete} it remains to show that $\mathbf{T}_{F}^{1}$ already entails those existential and universal sentences which are entailed by $\mathbf{T}_{F}$.

\begin{definition}
We define two subtheories of $\mathbf{T}_{F}^{1}$. 
Let $T_{F}^{\exists}$ be the $\mathcal{L}_{\rm vf}$-theory consisting of the following axioms (expressed informally about a structure $(K,v)$):
\begin{enumerate}
\item $(K,v)$ is an equicharacteristic henselian nontrivially valued field and
\item $Kv$ is a model of the existential $\mathcal{L}_{\rm ring}$-theory of $F$.
\end{enumerate}
Let $T_{F}^{\forall}$ be the $\mathcal{L}_{\rm vf}$-theory consisting of the following axioms (again expressed informally):
\begin{enumerate}
\item $(K,v)$ is an equicharacteristic henselian nontrivially valued field and
\item $Kv$ is a model of the universal $\mathcal{L}_{\rm ring}$-theory of $F$.
\end{enumerate}
\end{definition}
Note that 
$\mathbf{T}_{F}^{1}\equiv T_{F}^{\exists}\cup T_{F}^{\forall}$.

\begin{lemma}\label{lem:existential.axiom}
Let $\phi$ be an existential $\mathcal{L}_{\rm vf}$-sentence. If $\mathbf{T}_{F}\models\phi$ then $T_{F}^{\exists}\models\phi$.
\end{lemma}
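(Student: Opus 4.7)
The plan is to reduce the claim to Lemma 5.2 (Going up from $F(t)^{h}$). The key observation is that $(F(t)^{h},v_{t},C)$ itself models $\mathbf{T}_{F}$: the valued field $F(t)^{h}$ is equicharacteristic, henselian, and nontrivially valued, with residue field literally equal to $F$, and the prime field $C$ sits inside it with trivial valuation. Hence the hypothesis $\mathbf{T}_{F}\models\phi$ immediately yields $(F(t)^{h},v_{t})\models\phi$.

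Given an arbitrary $(L,w)\models T_{F}^{\exists}$, my aim is to produce some tuple $(L^{*},w^{*},D,i)\in\mathbf{H}(F/C)$ whose valued-field reduct is elementarily equivalent to $(L,w)$; Lemma 5.2 will then deliver $(L^{*},w^{*})\models\phi$, and elementary equivalence will conclude $(L,w)\models\phi$. I would take $(L^{*},w^{*})$ to be a $|F|^{+}$-saturated elementary extension of $(L,w)$, which remains equicharacteristic, henselian, and nontrivially valued, and let $D$ be the canonical copy of $C$ inside $L^{*}$. Its residue field $L^{*}w^{*}$ is an elementary extension of $Lw$ and inherits $|F|^{+}$-saturation, since saturation of a many-sorted structure transfers to each of its sorts. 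The defining axiom $Lw\models\mathrm{Th}_{\exists}(F)$ is equivalent via contrapositives to $F\models\mathrm{Th}_{\forall}(L^{*}w^{*})$; combined with saturation this produces an $\mathcal{L}_{\mathrm{ring}}$-embedding $i:F\hookrightarrow L^{*}w^{*}$ fixing $C$ pointwise. Then $(L^{*},w^{*},D,i)$ meets all four conditions of \autoref{def:EH(F/C)}, so it lies in $\mathbf{H}(F/C)$, and since $\phi$ carries neither $C$-constants nor residue-sort parameters, \autoref{lem:go.up.from.F(t)h} applied to $\phi$ gives $(L^{*},w^{*})\models\phi$; elementarity yields $(L,w)\models\phi$ as required.

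The only non-routine ingredient is the construction of the embedding $i:F\hookrightarrow L^{*}w^{*}$, and this rests on two standard model-theoretic facts: the syntactic equivalence between $Lw\models\mathrm{Th}_{\exists}(F)$ and $F\models\mathrm{Th}_{\forall}(Lw)$, and the descent of saturation to each sort of a many-sorted structure. Once these are in place, all the substantive work has already been carried out in Section \ref{sec:transfer}, and the present lemma is essentially a repackaging of \autoref{lem:go.up.from.F(t)h} together with a compactness/saturation argument at the residue field.
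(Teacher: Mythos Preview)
Your proof is correct and is essentially the paper's own argument: both note that $(F(t)^{h},v_{t})\models\mathbf{T}_{F}$, pass to an elementary extension of the given model so that $F$ embeds into the residue field, and then invoke \autoref{lem:go.up.from.F(t)h}. The only cosmetic difference is that the paper obtains the embedding $F\hookrightarrow K^{*}v^{*}$ by a direct compactness argument (citing \cite[Lemma~2.3.3]{Marker00}) rather than by passing to a saturated model and arguing that saturation descends to the residue sort.
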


\begin{proof}
Let $(K,v)\models T_{F}^{\exists}$. 
Then $Kv$ is a model of $\mathrm{Th}_{\exists}(F)$; equivalently the theory of $Kv$ is consistent with the atomic diagram of $F$. 
Thus there is an elementary extension $(K,v)\preceq(K^{*},v^{*})$ with an embedding $\sigma:F\rightarrow K^{*}v^{*}$, cf.~\cite[Lemma 2.3.3]{Marker00}.
Note that $(K^{*},v^{*},\sigma)\in\mathbf{H}(F)$
and that $(F(t)^h,v_t)\models\mathbf{T}_F$, hence $(F(t)^h,v_t)\models\phi$.
Therefore, \autoref{lem:go.up.from.F(t)h} implies that $(K^{*},v^{*})\models\phi$; thus $(K,v)\models\phi$. This shows that $T_{F}^{\exists}\models\phi$.
\end{proof}

\begin{lemma}\label{lem:universal.axiom}
Let $\phi$ be a universal $\mathcal{L}_{\rm vf}$-sentence. If $\mathbf{T}_{F}\models\phi$ then $T_{F}^{\forall}\models\phi$.
\end{lemma}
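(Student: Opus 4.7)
The plan is a contrapositive argument that dualises the proof of \autoref{lem:existential.axiom}. I assume $T_{F}^{\forall}\not\models\phi$ and aim to produce a model of $\mathbf{T}_{F}\cup\{\neg\phi\}$, contradicting the hypothesis $\mathbf{T}_{F}\models\phi$. The slogan is: extend a witness $(K,v)\models T_{F}^{\forall}\cup\{\neg\phi\}$ to an $\mathcal{L}_{\rm vf}$-overfield $(L,w)\models\mathbf{T}_{F}$; then $\phi$ (by hypothesis on $\mathbf{T}_F$) and $\neg\phi$ (since $\neg\phi$ is existential and goes up) both hold in $(L,w)$, a contradiction.

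First, fix $(K,v)\models T_{F}^{\forall}\cup\{\neg\phi\}$. Since $Kv$ models the universal $\mathcal{L}_{\rm ring}$-theory of $F$, equivalently $F\models\mathrm{Th}_{\exists}(Kv)$, the standard model-theoretic fact used (in the opposite direction) in \autoref{lem:existential.axiom} produces an elementary extension $F\preceq F^{*}$ together with an $\mathcal{L}_{\rm ring}$-embedding $\sigma:Kv\hookrightarrow F^{*}$.

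Next, I realise $\sigma$ as a residue-field map sitting above $(K,v)$. Applying \autoref{fact:complete.the.square} to $(K,v)$ and the extension $F^{*}/Kv$ (via $\sigma$), I obtain a valued field extension $(L_{0},w_{0})/(K,v)$ with $L_{0}w_{0}\cong F^{*}$ as extensions of $Kv$. I then let $(L,w)$ denote the henselization of $(L_{0},w_{0})$. Since $(K,v)$ is already henselian, the natural embedding $(K,v)\subseteq(L_0,w_0)$ lifts to an $\mathcal{L}_{\rm vf}$-embedding $(K,v)\subseteq(L,w)$; and since henselization is an immediate extension, $Lw\cong F^{*}$.

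It remains to verify $(L,w)\models\mathbf{T}_{F}$: it is henselian by construction, nontrivially valued because it extends the nontrivially valued $(K,v)$, and equicharacteristic since $\mathrm{char}(L)=\mathrm{char}(K)=\mathrm{char}(Kv)=\mathrm{char}(F^{*})=\mathrm{char}(Lw)$ via $\sigma$; finally $Lw\cong F^{*}\equiv F$, and the prime-field parameters from $C$ are interpreted canonically in both, so $(Lw,Dv)\equiv(F,C)$ as $\mathcal{L}_{\rm ring}(C)$-structures. Thus $(L,w)\models\phi$, while $(L,w)\models\neg\phi$ by going-up, the desired contradiction. I do not expect any real obstacle; the only slightly delicate point is to confirm that \autoref{fact:complete.the.square} followed by henselization gives an $\mathcal{L}_{\rm vf}$-extension whose induced residue-field map really is $\sigma$, but this is built into the statement of \autoref{fact:complete.the.square}.
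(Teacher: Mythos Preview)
Your proof is correct and follows essentially the same route as the paper: embed $Kv$ into an elementary extension of $F$, use \autoref{fact:complete.the.square} to realise this as a residue-field extension over $(K,v)$, henselize, and then transfer $\phi$ along the resulting inclusion. The only cosmetic difference is that the paper argues directly (the universal sentence $\phi$ goes down from $(L,w)^{h}$ to $(K,v)$) whereas you argue contrapositively (the existential sentence $\neg\phi$ goes up from $(K,v)$ to $(L,w)$); these are the same observation.
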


\begin{proof}
Let $(K,v)\models T_{F}^{\forall}$. Then $Kv\models\mathrm{Th}_{\forall}(F)$. 
There exists $F'\equiv F$ with an embedding $\sigma:Kv\rightarrow F'$, see \cite[Ex.~2.5.10]{Marker00}. 
Using \autoref{fact:complete.the.square}, we may choose an equicharacteristic nontrivially valued field $(L,w)$ which extends $(K,v)$ and is such that $Lw$ is isomorphic to $F'$. In particular $Lw\equiv F$. Let $(L,w)^{h}$ be the henselisation of $(L,w)$; then we have $(L,w)^{h}\models\mathbf{T}_{F}$,
so $(L,w)^h\models\phi$. 
Since $\phi$ is universal, we conclude that $(K,v)\models\phi$.
\end{proof}

\begin{theorem}\label{thm:E.complete}\rm(\bf $\exists$-completeness\rm) \em 
$\mathbf{T}_{F}^{1}$ is $\exists$-complete, i.e. for any $\exists$-$\mathcal{L}_{\rm vf}$-sentence $\phi$ either $\mathbf{T}_{F}^{1}\models\phi$ or $\mathbf{T}_{F}^{1}\models\neg\phi$.
\end{theorem}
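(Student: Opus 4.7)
The plan is to combine the three lemmas immediately preceding the theorem. The key observation is that $\exists$-completeness of the weaker theory $\mathbf{T}_F^1$ follows almost formally from $\exists$-completeness of the stronger theory $\mathbf{T}_F$ (\autoref{lem:E.complete}), once we know that both the positive and negative "verdicts" delivered by $\mathbf{T}_F$ on existential sentences can actually be transferred down to $\mathbf{T}_F^1$.

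First I would fix an $\exists$-$\mathcal{L}_{\rm vf}$-sentence $\phi$ and apply \autoref{lem:E.complete} to conclude that either $\mathbf{T}_F \models \phi$ or $\mathbf{T}_F \models \neg\phi$. I would then split into two cases. In the first case, $\phi$ is existential and $\mathbf{T}_F \models \phi$, so \autoref{lem:existential.axiom} yields $T_F^\exists \models \phi$. Since $T_F^\exists \subseteq \mathbf{T}_F^1$ (using the observation $\mathbf{T}_F^1 \equiv T_F^\exists \cup T_F^\forall$ recorded after the definition of the two subtheories), we obtain $\mathbf{T}_F^1 \models \phi$. In the second case, $\mathbf{T}_F \models \neg\phi$; here the sentence $\neg\phi$ is logically equivalent to a universal sentence, so \autoref{lem:universal.axiom} applies and gives $T_F^\forall \models \neg\phi$, whence $\mathbf{T}_F^1 \models \neg\phi$.

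There is no genuine obstacle at this stage, since all the real content has already been packaged into the three preceding lemmas: \autoref{lem:E.complete} supplies the dichotomy at the level of $\mathbf{T}_F$, while Lemmas \ref{lem:existential.axiom} and \ref{lem:universal.axiom} are exactly the transfer statements needed to push the $\exists$-half and the $\forall$-half (i.e.\ the negation) of the dichotomy down from $\mathbf{T}_F$ to its fragments $T_F^\exists$ and $T_F^\forall$ respectively. The only minor point worth flagging explicitly in the write-up is that $\neg\phi$ being universal is what makes \autoref{lem:universal.axiom} (rather than \autoref{lem:existential.axiom}) applicable in the second case, and that in each case one invokes $\mathbf{T}_F^1 \equiv T_F^\exists \cup T_F^\forall$ to lift the conclusion from the subtheory back to $\mathbf{T}_F^1$.
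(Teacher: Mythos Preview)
Your proposal is correct and matches the paper's proof essentially line for line: apply \autoref{lem:E.complete} to get the dichotomy for $\mathbf{T}_F$, then use \autoref{lem:existential.axiom} in the positive case and \autoref{lem:universal.axiom} in the negative case, and finally invoke $\mathbf{T}_F^1 \equiv T_F^\exists \cup T_F^\forall$ to conclude. The only cosmetic difference is that you make explicit the (obvious) point that $\neg\phi$ is universal.
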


\begin{proof} 
Let $\phi$ be an existential $\mathcal{L}_{\rm vf}$-sentence. By \autoref{lem:E.complete}, either $\mathbf{T}_{F}\models\phi$ or $\mathbf{T}_{F}\models\neg\phi$. In the first case we apply \autoref{lem:existential.axiom} and find that $T_{F}^{\exists}\models\phi$; in the second case we apply \autoref{lem:universal.axiom} and find that $T_{F}^{\forall}\models\neg\phi$. Since $\mathbf{T}_{F}^{1}\equiv T_{F}^{\exists}\cup T_{F}^{\forall}$, in either case $\mathbf{T}_{F}^{1}$ `decides' $\phi$, and we are done.
\end{proof}

\begin{remark}
Let $\chi(x)$ be an existential $\mathcal{L}_{\rm ring}$-formula with one free variable. 
In \cite{Anscombe-Fehm??} and other work on definable henselian valuations, we apply \autoref{thm:E.complete} to the following $\exists$- or $\forall$-$\mathcal{L}_{\rm vf}$-sentences.
\begin{enumerate}
\item $\forall x\;(\chi(x)\longrightarrow v(x)\geq0)$,
\item $\forall x\;(\chi(x)\longrightarrow v(x)>0)$, and
\item $\exists x\;(v(x)>0\wedge x\neq0\wedge\chi(x))$.
\end{enumerate}
We also apply \autoref{cor:near.AkE.completeness} to the $\forall^{k}\exists$-$\mathcal{L}_{\rm vf}$-sentence
\begin{enumerate}
\setcounter{enumi}{3}
\item $\forall^{k}x\exists y\;(\mathrm{res}(y)=x\wedge\chi(y))$.
\end{enumerate}
\end{remark}

\section{An `Existential AKE Principle' and existential decidability}
\label{sec:decidability}

\noindent
\autoref{thm:E.complete} shows that the existential (respectively, universal) theory of an equicharacteristic henselian nontrivially valued field only depends only on the existential (resp. universal) theory of its residue field. We formulate this in the following `Existential AKE Principle'.

\begin{theorem}\label{thm:existential.AKE}
Let $(K,v)$ and $(L,w)$ be equicharacteristic henselian nontrivially valued fields. Then
$$
 (K,v)\models\mathrm{Th}_{\exists}(L,w)\;\;\text{ if and only if }\;\;Kv\models\mathrm{Th}_{\exists}(Lw).
$$
\end{theorem}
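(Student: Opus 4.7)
The plan is to handle the two directions separately and to reduce each to material already established. The forward implication will be essentially formal, while the backward implication will be an application of the $\exists$-completeness result (\autoref{thm:E.complete}) together with \autoref{lem:existential.axiom}.

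For the forward direction, suppose $(K,v)\models\mathrm{Th}_{\exists}(L,w)$, and let $\chi$ be an existential $\mathcal{L}_{\rm ring}$-sentence with $Lw\models\chi$. Writing $\chi=\exists\mathbf{x}\,\rho(\mathbf{x})$ for some quantifier-free $\rho$, I would form the $\exists$-$\mathcal{L}_{\rm vf}$-sentence $\chi^{k}$ obtained by interpreting all variables in the residue field sort and all operations as the residue field operations. Then $\chi^{k}$ is existential in $\mathcal{L}_{\rm vf}$ and $(L,w)\models\chi^{k}$, so by hypothesis $(K,v)\models\chi^{k}$, which gives $Kv\models\chi$. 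This yields $Kv\models\mathrm{Th}_{\exists}(Lw)$.

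For the backward direction, set $F:=Lw$ and let $C$ be the prime field, so that $\mathbf{T}_{F}=\mathbf{T}_{F/C}$ and $T_{F}^{\exists}$ are as in Section 6. Note first that $(L,w)\models\mathbf{T}_{F}$ trivially (the residue field is $F$ itself), and that $Kv\models\mathrm{Th}_{\exists}(Lw)=\mathrm{Th}_{\exists}(F)$ by hypothesis, so $(K,v)\models T_{F}^{\exists}$. Now let $\phi$ be an arbitrary existential $\mathcal{L}_{\rm vf}$-sentence with $(L,w)\models\phi$. Then $\mathbf{T}_{F}\cup\{\phi\}$ is consistent, so by the $\exists$-completeness of $\mathbf{T}_{F}$ (\autoref{lem:E.complete}) we have $\mathbf{T}_{F}\models\phi$. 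Applying \autoref{lem:existential.axiom} gives $T_{F}^{\exists}\models\phi$, and since $(K,v)\models T_{F}^{\exists}$ we conclude $(K,v)\models\phi$. Reversing the roles of $(K,v)$ and $(L,w)$ (or equivalently running the symmetric argument with $F$ replaced by $Kv$, using that $Kv$ and $Lw$ satisfy the same existential $\mathcal{L}_{\rm ring}$-sentences) gives the other inclusion, hence $(K,v)\models\mathrm{Th}_{\exists}(L,w)$.

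Since all the heavy lifting has been absorbed into \autoref{thm:E.complete} and \autoref{lem:existential.axiom}, I do not expect any genuine obstacle; the only small point to be careful about is the symmetry step in the backward direction, where one must observe that $Kv\models\mathrm{Th}_{\exists}(Lw)$ together with the fact that existential theories of fields are complete on positive atomic data does \emph{not} by itself give $Lw\models\mathrm{Th}_{\exists}(Kv)$, so one genuinely has to rerun the argument with $F:=Kv$ (using that $(K,v)\models\mathbf{T}_{Kv}$ trivially and $(L,w)\models T_{Kv}^{\exists}$ by the forward direction already proved, applied in the opposite direction).
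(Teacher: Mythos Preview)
Your argument is essentially the same as the paper's: the forward direction is the trivial translation of residue-field existential sentences into $\mathcal{L}_{\rm vf}$-existential sentences, and the backward direction goes via $(K,v)\models T_{Lw}^{\exists}$, \autoref{lem:E.complete}, and \autoref{lem:existential.axiom}. That part is fine and complete.

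However, your final ``symmetry step'' and the accompanying caveat paragraph are superfluous and confused. The backward direction asks you to show the \emph{one-directional} statement $(K,v)\models\mathrm{Th}_{\exists}(L,w)$, i.e.\ every existential sentence true in $(L,w)$ is true in $(K,v)$. You have already shown exactly this: given $\phi$ existential with $(L,w)\models\phi$, you deduced $(K,v)\models\phi$. There is no ``other inclusion'' to establish. Your attempt to rerun the argument with $F:=Kv$ would moreover require $Lw\models\mathrm{Th}_{\exists}(Kv)$, which (as you yourself note) is not part of the hypothesis; the justification you offer for it (``the forward direction applied in the opposite direction'') is circular. Simply delete everything from ``Reversing the roles\ldots'' onward and the proof is clean and matches the paper. (A minor point: in your opening plan you cite \autoref{thm:E.complete}, but the result you actually invoke---and need---is \autoref{lem:E.complete}, the $\exists$-completeness of $\mathbf{T}_{F}$ rather than of $\mathbf{T}_{F}^{1}$.)
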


\begin{proof}
$(\Longrightarrow)$ Note that the maximal ideal is defined by the quantifier-free formula $v(x)>0$.
Therefore any existential statement about the residue field can be translated into an existential statement about the valued field.

$(\Longleftarrow)$. If $Kv\models\mathrm{Th}_{\exists}(Lw)$ then $(K,v)\models T_{Lw}^{\exists}$. By \autoref{lem:E.complete}, $\mathbf{T}_{Lw}$ entails the existential theory of $(L,w)$; and, by \autoref{lem:existential.axiom}, $T_{Lw}^{\exists}$ entails the existential consequences of $\mathbf{T}_{Lw}$. Combining these two statements, we have that $T_{Lw}^{\exists}$ entails the existential theory of $(L,w)$. Therefore $(K,v)$ models the existential theory of $(L,w)$.
\end{proof}

\begin{corollary} 
\label{cor:existential.AKE}
Let $(K,v)$ and $(L,w)$ be equicharacteristic henselian nontrivially valued fields. Then
$$
 \mathrm{Th}_\exists(K,v)=\mathrm{Th}_{\exists}(L,w)\;\;\text{ if and only if }\;\;\mathrm{Th}_\exists(Kv)=\mathrm{Th}_{\exists}(Lw).
$$
\end{corollary}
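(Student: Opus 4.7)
The plan is to derive this as a straightforward symmetric application of the preceding \autoref{thm:existential.AKE}. The key observation is that for any two structures $\mathcal{M}$ and $\mathcal{N}$ in the same language, the equality $\mathrm{Th}_\exists(\mathcal{M}) = \mathrm{Th}_\exists(\mathcal{N})$ is equivalent to the conjunction $\mathcal{M}\models\mathrm{Th}_\exists(\mathcal{N})$ and $\mathcal{N}\models\mathrm{Th}_\exists(\mathcal{M})$; indeed, each inclusion of existential theories corresponds to one of these satisfaction statements.

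First, I would unpack both sides of the stated biconditional using this observation. The left-hand side $\mathrm{Th}_\exists(K,v)=\mathrm{Th}_\exists(L,w)$ becomes the conjunction of $(K,v)\models\mathrm{Th}_\exists(L,w)$ and $(L,w)\models\mathrm{Th}_\exists(K,v)$. The right-hand side $\mathrm{Th}_\exists(Kv)=\mathrm{Th}_\exists(Lw)$ becomes $Kv\models\mathrm{Th}_\exists(Lw)$ and $Lw\models\mathrm{Th}_\exists(Kv)$.

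Next, I would apply \autoref{thm:existential.AKE} to each of the two conjuncts separately: once in the form stated (with $(K,v)$ and $(L,w)$ in the roles given), yielding the equivalence of $(K,v)\models\mathrm{Th}_\exists(L,w)$ with $Kv\models\mathrm{Th}_\exists(Lw)$; and once with the roles of the two valued fields swapped, yielding the equivalence of $(L,w)\models\mathrm{Th}_\exists(K,v)$ with $Lw\models\mathrm{Th}_\exists(Kv)$. Both applications are legitimate because the hypotheses of \autoref{thm:existential.AKE} are symmetric in $(K,v)$ and $(L,w)$: each is assumed to be an equicharacteristic henselian nontrivially valued field.

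There is essentially no obstacle here; the proof is a two-line deduction from \autoref{thm:existential.AKE}. The only thing one needs to be mindful of is to verify the trivial lemma that equality of existential theories is the conjunction of two containments, so that \autoref{thm:existential.AKE} can be applied in both directions to obtain the desired symmetric statement about equality of existential theories.
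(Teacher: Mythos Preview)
Your proposal is correct and matches the paper's own proof essentially verbatim: the paper also unpacks each equality of existential theories as the conjunction of the two containments and then applies \autoref{thm:existential.AKE} twice, once in each direction.
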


\begin{proof}
This follows from \autoref{thm:existential.AKE},
since $\mathrm{Th}_\exists(K,v)=\mathrm{Th}_{\exists}(L,w)$ iff both
$(K,v)\models\mathrm{Th}_{\exists}(L,w)$ and $(L,w)\models\mathrm{Th}_{\exists}(K,v)$,
and $\mathrm{Th}_\exists(Kv)=\mathrm{Th}_{\exists}(Lw)$ iff both
$Kv\models\mathrm{Th}_{\exists}(Lw)$ and $Lw\models\mathrm{Th}_{\exists}(Kv)$.
\end{proof}

Note that \autoref{cor:existential.AKE} is in fact simply a reformulation of \autoref{thm:E.complete}.
Note moreover that, by the usual duality between existential and universal sentences, the same principle holds with `$\exists$' replaced by `$\forall$'.

\begin{remark}
The reader probably noticed that as opposed to the usual AKE principles, the value group does not occur here.
However, since all nontrivial ordered abelian groups have the same existential theory 
(which follows immediately from the completeness of the theory of divisible ordered abelian groups, see also \cite{GK}),
\autoref{cor:existential.AKE} could also be phrased as
$$
 \mathrm{Th}_\exists(K,v)=\mathrm{Th}_{\exists}(L,w)\;\text{ if and only if }\;\mathrm{Th}_\exists(Kv)=\mathrm{Th}_{\exists}(Lw)\text{ and }
 \mathrm{Th}_\exists(vK)=\mathrm{Th}_\exists(wL).
$$
In residue characteristic zero, this special form of the existential AKE principle was known before,
see e.g.~\cite[p.~192]{KoenigsmannSurvey}.
\end{remark}

Next we deduce \autoref{cor:intro.existential.decidability} from \autoref{thm:E.complete}.

\begin{corollary}\label{cor:existential.decidability}
Let $(K,v)$ be an equicharacteristic henselian nontrivially valued field. The following are equivalent.
\begin{enumerate}
\item ${\rm Th}_\exists(Kv)$ is decidable.
\item ${\rm Th}_\exists(K,v)$ is decidable.
\end{enumerate}
\end{corollary}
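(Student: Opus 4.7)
The plan is to combine \autoref{thm:E.complete} with an easy syntactic translation and a standard ``completeness plus recursive axiomatization yields decidability'' argument; the real content of the corollary lives in \autoref{thm:E.complete}.

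For $(2)\Rightarrow(1)$, the argument will be routine. Since $\mathcal{L}_{\rm vf}$ has a dedicated residue field sort, any existential $\mathcal{L}_{\rm ring}$-sentence $\chi$ translates into an existential $\mathcal{L}_{\rm vf}$-sentence $\tilde\chi$ (with all quantifiers interpreted in the residue field sort) such that $Kv\models\chi$ iff $(K,v)\models\tilde\chi$. A decision procedure for $\mathrm{Th}_\exists(K,v)$ therefore yields one for $\mathrm{Th}_\exists(Kv)$ by running it on $\tilde\chi$.

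For $(1)\Rightarrow(2)$, I would assume that $\mathrm{Th}_\exists(Kv)$ is decidable and first observe that $\mathrm{Th}_\forall(Kv)$ is then also decidable: a universal sentence $\forall\mathbf{x}\,\psi(\mathbf{x})$ belongs to it iff the existential sentence $\exists\mathbf{x}\,\neg\psi(\mathbf{x})$ does not belong to $\mathrm{Th}_\exists(Kv)$. Consequently the theory $\mathbf{T}_{Kv}^{1}$ admits a decidable axiomatization: the fixed (and plainly recursive) schema of axioms for equicharacteristic henselian nontrivially valued fields, together with the existential and universal theories of $Kv$ translated into the residue field sort. In particular the set of logical consequences of $\mathbf{T}_{Kv}^{1}$ is recursively enumerable. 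Given an existential $\mathcal{L}_{\rm vf}$-sentence $\phi$, \autoref{thm:E.complete} guarantees that exactly one of $\phi$ and $\neg\phi$ is a consequence of $\mathbf{T}_{Kv}^{1}$, so the decision procedure is to enumerate these consequences and wait until either $\phi$ or $\neg\phi$ appears, then answer accordingly.

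There is no genuine obstacle: all difficulty has been absorbed into \autoref{thm:E.complete} and its preparatory lemmas. The two small points that warrant a line each in the write-up are the symmetry between decidability of $\mathrm{Th}_\exists(Kv)$ and $\mathrm{Th}_\forall(Kv)$, and the fact that the class of equicharacteristic henselian nontrivially valued fields is recursively axiomatized; both are standard.
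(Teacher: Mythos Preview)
Your proposal is correct and matches the paper's proof essentially step for step: both directions use the same ideas (interpretation of the residue field for $(2)\Rightarrow(1)$, and for $(1)\Rightarrow(2)$ the observation that decidability of $\mathrm{Th}_\exists(Kv)$ yields an effective axiomatization of $\mathbf{T}_{Kv}^{1}$, after which \autoref{thm:E.complete} plus the standard ``$\exists$-complete and recursively axiomatized implies $\exists$-decidable'' argument finishes). The only cosmetic difference is that the paper phrases the effective axiomatizability as ``we may recursively enumerate $R_F^1$'' rather than spelling out the $\exists$/$\forall$ symmetry explicitly.
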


\begin{proof}[Proof of \autoref{cor:existential.decidability}]
$(2\implies1)$ As before, residue fields are interpreted in valued fields in such a way that existential statements about $Kv$ remain existential statements about $(K,v)$. 
Therefore, if $(K,v)$ is $\exists$-decidable, then $Kv$ is $\exists$-decidable.

$(1\implies2)$ Write $F:=Kv$ and suppose that $F$ is $\exists$-decidable. Then we may recursively enumerate the existential and universal theory $R^{1}_{F}$ of $F$. Consequently $\mathbf{T}_{F}^{1}$ is effectively axiomatisable. By \autoref{thm:E.complete}, $\mathbf{T}_{F}^{1}$ is an $\exists$-complete subtheory of $\mathrm{Th}(K,v)$. Thus we may decide the truth of existential (and universal) sentences in $(K,v)$.
\end{proof}

Let $\mathcal{L}_{\rm vf}(t)$ be the language of valued fields with an additional parameter $t$, and let $q$ be a prime power. 
In \cite{Denef-Schoutens03}, it is shown that resolution of singularities in characteristic $p$ would imply that the existential $\mathcal{L}_{\rm vf}(t)$-theory of $\mathbb{F}_{q}((t))$ is decidable. 
Using our methods we can prove the following weaker but unconditional result.

\begin{corollary}\label{cor:Fq((t))}
The existential theory of $\mathbb{F}_{q}((t))$ in the language of valued fields is decidable.
\end{corollary}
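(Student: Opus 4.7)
The plan is to apply \autoref{cor:existential.decidability} directly to the valued field $(K,v) = (\mathbb{F}_q((t)), v_t)$. First I would verify the hypotheses: the field $\mathbb{F}_q((t))$ of formal Laurent series, equipped with the $t$-adic valuation $v_t$, is a complete discretely valued field of characteristic $p$ with residue field $\mathbb{F}_q$ of the same characteristic; in particular it is equicharacteristic, henselian (completeness implies henselianity), and nontrivially valued. So \autoref{cor:existential.decidability} applies and tells me that $\mathrm{Th}_\exists(\mathbb{F}_q((t)), v_t)$ is decidable if and only if $\mathrm{Th}_\exists(\mathbb{F}_q)$ (in the language of rings) is decidable.

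The remaining point is that $\mathrm{Th}_\exists(\mathbb{F}_q)$ is decidable. This is immediate and requires no serious argument: since $\mathbb{F}_q$ is finite, its full first-order $\mathcal{L}_{\mathrm{ring}}$-theory is decidable — one can mechanically evaluate any sentence by quantifying over the finitely many elements of $\mathbb{F}_q$ (whose addition and multiplication tables are explicitly computable). In particular its existential fragment is decidable. Combining this with \autoref{cor:existential.decidability} yields the claim. I expect no real obstacle: Sections 6 and 7 have been engineered precisely to reduce the existential decidability of an equicharacteristic henselian nontrivially valued field to that of its residue field, and for $\mathbb{F}_q$ the residue-field side is trivial. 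The only thing worth flagging explicitly is that the language here is $\mathcal{L}_{\rm vf}$ without a constant for $t$, in contrast with the conditional result of Denef and Schoutens referenced in the introduction.
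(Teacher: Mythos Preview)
Your proposal is correct and matches the paper's first proof essentially verbatim: apply \autoref{cor:existential.decidability} to $(\mathbb{F}_q((t)),v_t)$ and note that the existential theory of the finite field $\mathbb{F}_q$ is decidable. The paper also offers a second, more direct proof via the decidability of the tame field $\mathbb{F}_q((t))^{\mathbb{Q}}$, but your argument is the intended primary one.
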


\begin{proof}[First proof]
We can apply \autoref{cor:existential.decidability}, noting that ${\rm Th}_\exists(\mathbb{F}_{q})$ is decidable.
\end{proof}

For the sake of interest, we present a more direct proof of this special case. 
However, note that this `second proof' uses the decidability of $\mathbb{F}_q$,
while the `first proof' used only the decidability of the {\em existential} theory of $\mathbb{F}_q$.

\begin{proof}[Second proof]
As an equicharacteristic tame field (\autoref{lem:racl.tame}) with decidable residue field and value group, 
$(\mathbb{F}_{q}((t))^{\mathbb{Q}},v_{t})$ is decidable, by \cite[Theorem 7.7(a)]{Kuhlmann??}. 
Since $(\mathbb{F}_{q}((t))^{\mathbb{Q}},v_{t})$ is the directed union of structures 
isomorphic to $(\mathbb{F}_{q}((t)),v_{t})$ (\autoref{prp:finite.extensions.of.power.series}), 
in fact \linebreak $(\mathbb{F}_{q}((t)),v_{t})$ and $(\mathbb{F}_{q}((t))^{\mathbb{Q}},v_{t})$ have the same $\exists$-$\mathcal{L}_{\rm vf}$-theory. 
Thus, to decide the existential $\mathcal{L}_{\rm vf}$-theory of $(\mathbb{F}_{q}((t)),v_{t})$, it suffices to apply the decision procedure for the $\mathcal{L}_{\rm vf}$-theory of $(\mathbb{F}_{q}((t))^{\mathbb{Q}},v_{t})$.
\end{proof}

\begin{remark}
Since \autoref{cor:Fq((t))} shows decidability of the existential theory of $\mathbb{F}_q((t))$
in the language of {\em valued} fields $\mathcal{L}_{\rm vf}$,
in which the valuation ring is definable by a quantifier-free formula,
we also get decidability of the existential theory of the {\em ring} $\mathbb{F}_q[[t]]$.
It might however be interesting to point out that it was proven only recently that
already decidability of the existential theory of $\mathbb{F}_q((t))$ in the language of {\em rings}
would imply decidability of the existential theory of the ring $\mathbb{F}_q[[t]]$,
see \cite[Corollary 3.4]{Anscombe-Koenigsmann14}.
\end{remark}

\begin{remark}
The $\exists$-$\mathcal{L}_{\rm vf}(t)$-theory of $(\mathbb{F}_{q}((t)),v_{t})$ is equivalent to the $\forall_{1}^K\exists$-$\mathcal{L}_{\rm vf}$-theory of $(\mathbb{F}_{q}((t)),v_{t})$. This `equivalence' is meant in the sense that there is a truth-preserving effective translation between  $\exists$-$\mathcal{L}_{\rm vf}(t)$-sentences and
$\forall\exists$-$\mathcal{L}_{\rm vf}$-sentences which have only one universal quantifier ranging over the valued field sort
(and arbitrary existential quantifiers).
In this argument we make repeated use of the fact that, for all $a\in\mathbb{F}_{q}((t))$ with $v_t(a)>0$ and $a\neq0$, there is an 
$\mathcal{L}_{\rm vf}$-embedding $\mathbb{F}_{q}((t))\longrightarrow\mathbb{F}_{q}((t))$ which sends $t\longmapsto a$.

Let $\phi(t)$ be an existential $\mathcal{L}_{\rm vf}(t)$-sentence. We claim that $\phi(t)$ is equivalent to the $\forall_{1}^K\exists$-$\mathcal{L}_{\rm vf}$-sentence 
$$
 \forall u\;((v(u)>0\wedge u\neq0)\longrightarrow\phi(u)).
$$ 
This follows from the fact about embeddings, stated above.

On the other hand, let $\psi(x)$ be an $\exists$-$\mathcal{L}_{\rm vf}$-formula in one free variable $x$ in the valued field sort and consider the $\exists$-$\mathcal{L}_{\rm vf}(t)$-sentence $\chi$ which is defined to be
$$
 \exists y\exists z_{0}...\exists z_{q-1}\;(yt=1\wedge\psi(y)\wedge\bigwedge_{j}z_{j}^{q}=z_{j}\wedge\bigwedge_{i\neq j}z_{i}\neq z_{j}\wedge\bigwedge_{j}\psi(z_{j}+t)\wedge\bigwedge_{j}\psi(z_{j})).
$$
Written more informally, the sentence $\chi$ expresses that
$$
 \psi(t^{-1})\wedge\bigwedge_{z\in\mathbb{F}_{q}}(\psi(z+t)\wedge\psi(z)).
$$
We claim that $\forall x\;\psi(x)$ and $\chi$ are equivalent. First suppose that $\mathbb{F}_{q}((t))\models\forall x\;\psi(x)$. By choosing $(z_{j})$ to be an enumeration of $\mathbb{F}_{q}$, we immediately have that $\mathbb{F}_{q}((t))\models\chi$.

In the other direction, suppose that $\mathbb{F}_{q}((t))\models\chi$ and let $a\in\mathbb{F}_{q}((t))$. If $v_t(a)<0$ then consider the embedding which sends $t\longmapsto a^{-1}$. Since $\psi(t^{-1})$ holds, then $\psi(a)$ holds. 
On the other hand suppose that $v_t(a)\geq0$. If $a\in\mathbb{F}_{q}$ then $\chi$ already entails that $\psi(a)$. Now suppose that $a\notin\mathbb{F}_{q}$ and let $z$ be the residue of $a$. Consider the embedding which sends $t\longmapsto a-z$ (note that $a-z\neq0$). Since $\psi(z+t)$ holds, then $\psi(a)$ holds. This completes the proof that $\mathbb{F}_{q}((t))\models\forall x\;\psi(x)$.
\end{remark}

\section*{Acknowledgements}

\noindent
The authors would like to thank 
Immanuel Halupczok, Ehud Hrushovski, Jochen Koenigsmann, Dugald Macpherson and Alexander Prestel for helpful discussions and encouragement.

\bibliographystyle{plain}

\end{document}